\documentclass[11pt]{article}
\usepackage[noadjust]{cite}
\usepackage{amsthm, amsmath, amssymb, amsfonts, float, url, booktabs, tikz, setspace, fancyhdr, enumerate}
\usepackage{ booktabs, array, multirow, xcolor}
\usepackage{multirow} 
\usepackage{makecell}
\usepackage{enumitem}
\usepackage[margin = 1in]{geometry}
\usepackage{pgfplots}
\pgfplotsset{compat=1.18}
\usepackage[czech,english]{babel} 
\usepackage[hidelinks]{hyperref}
\usepackage[capitalise]{cleveref}
\usepackage{tikz}
\usepackage{comment}
\usepackage{xcolor}
\usepackage{cite}

\usepackage[textsize=scriptsize,colorinlistoftodos]{todonotes}

\newtheorem{theorem}{Theorem}[section]
\newtheorem{proposition}[theorem]{Proposition}
\newtheorem{lemma}[theorem]{Lemma}
\newtheorem{corollary}[theorem]{Corollary}

\newtheorem{problem}[theorem]{Problem}

\theoremstyle{definition}
\newtheorem{definition}[theorem]{Definition}

\theoremstyle{remark}

\let\le\leqslant
\let\ge\geqslant

\begin{document}

\title{Two problems of Burr, Erd\H os, Graham, and S\'os on maximal anti-Ramsey functions for $P_4$}
\author{Mingze Li\thanks{School of Mathematical Sciences, University of Science and Technology of China, Hefei, Anhui 230026, China. Email: \url{lmz10@mail.ustc.edu.cn}.} 
\and Bo Ning\thanks{College of Computer Science, Nankai University, Tianjin 300350, P.R. China. Research supported by National Natural Science Foundation of China (grant No. 12371350) and the Fundamental Research Funds for the Central Universities, Nankai University (No. 63263259). Email: \url{bo.ning@nankai.edu.cn}.} \and Tianying Xie\thanks{School of Mathematics and Statistics, Fuzhou University, Fuzhou, Fujian 350108, China. Research supported by National Key Research and
Development Program of China 2023YFA1010201 and National Natural Science Foundation of China grants 12501474 and 12471336. Email: \url{xiety@fzu.edu.cn}.}}
\date{June 25, 2026}
\maketitle

\begin{abstract}
Burr, Erd\H os, Graham, and S\'os introduced the maximal anti-Ramsey function $\chi_{\mathrm{S}}(n,e,L)$, the minimum number of colors required over all $n$-vertex graphs with at least $e$ edges such that every copy of $L$ is rainbow. 
In \cite{BEGS1989}, they posed the following two problems: (i) Is it true that there exists $C>0$, such that for all $u\ge 1$, $\chi_{\mathrm{S}}\left(n,\lfloor un \rfloor,P_4 \right)<Cu$ holds for all sufficiently large $n$?  (ii) Is it true that for all $\epsilon >0$, there exists $c(\epsilon)>0$ such that for all sufficiently large $n$, \\ $\chi_{\mathrm{S}}\left(n,\binom{n}{2}-\lfloor n^{2-\epsilon} \rfloor,P_4 \right)>c(\epsilon)n^{2}$? 
In this note, we give an affirmative answer to the first problem and a negative answer to the second problem.

For the first problem, our proof uses a local density inequality with strong edge-colorings of odd Kneser graphs.
In particular, our proof uses the characterization by Lu\v{z}ar, M\'{a}\v{c}ajov\'a, \v{S}koviera, and Sot\'ak of~$k$-regular graphs whose strong chromatic index equals~$2k-1$.
For the second result, our main tool is the construction of Alon, Moitra, and Sudakov. We show that for every fixed~$0<\epsilon<1/2$ there exist~$\gamma>0$ and arbitrarily large~$n$ such that~$\chi_{\mathrm{S}}\bigl(n,\tbinom{n}{2}-\lfloor n^{2-\epsilon}\rfloor,P_4\bigr)\;\le\; n^{2-\gamma}=o(n^{2}).$
\end{abstract}

\section{Introduction}

Ramsey theory asserts, in many different forms, that sufficiently large structures contain highly organized substructures. In the graph setting, the ambient structure is usually an edge-colored graph, and the organized subgraph is monochromatic. Anti-Ramsey theory, initiated by Erd\H os, Simonovits, and S\'os~\cite{ESS1975}, asks for the opposite extreme: a subgraph is called \emph{rainbow} if all its edges have distinct colors, and one seeks conditions forcing such rainbow subgraphs.

Burr, Erd\H os, Graham and S\'os \cite{BEGS1989} introduced a dual version of this problem. For graphs $G$ and $L$, let $\chi_{\mathrm{S}}(G,L)$ denote the minimum number of colors in an edge-coloring of $G$ in which every copy of $L$ is rainbow. 
\begin{definition}
    For a graph $L$ and positive integers $n,e$, we define $\chi_{\mathrm S}(n,e,L)$ as the minimum number of colors needed to color the edges of an $n$-vertex graph $G$ with at least $e$ edges such that every copy of $L$ in $G$ is rainbow. 
\end{definition}
Equivalently, it is the minimum over all such graphs:
    \[\chi_{\mathrm{S}}(n,e,L) = \min \{\chi_{\mathrm{S}}(G,L):|V(G)| = n,|E(G)| = e\}.\] 
The minimization over $G$ is essential. 
In other words, $\chi_{\mathrm{S}}(n,e,L)-1$ is the largest number $r$ such that for any graph $G$ with $n$ vertices and $e$ edges, every edge-coloring of $G$ with $r$ colors must contain a non-rainbow copy of $L$. 
This function is closely related to the Tur\'an number. For a graph $L$, the Tur\'{a}n number of $L$ is denoted by $\mathrm{ex}(n,L)$, which represents the maximum number of edges in an $n$-vertex graph that avoids copies of $L$. 
If $e \le \mathrm{ex}(n,L)$, then a subgraph of an extremal $L$-free graph shows $\chi_{\mathrm{S}}(n,e,L) = 1$. The problem becomes meaningful only beyond the Tur\'an threshold, where copies of $L$ are forced but may overlap in ways that allow economical colorings. 

A well-known result~\cite{E1962} in extremal graph theory implies that $\mathrm{ex}(n,C_{2k+1})=\lfloor n^{2}/4 \rfloor$ holds for all $k\ge 1$ when $n$ is sufficiently large; see also the complete characterization of extremal graphs for odd cycles by F\"uredi and Gunderson~\cite{FG2014}. Motivated by this threshold, Burr, Erd\H{o}s, Graham, and S\'os~\cite{BEGS1989} studied $\chi_{\mathrm{S}}\left(n,\lfloor n^{2}/4 \rfloor+1,C_{2k+1}\right)$. They obtained results for small odd cycles and conjectured that, for every $k\ge3$, this quantity is $n^2/8+o(n^2)$. This conjecture was later reiterated by Erd\H{o}s~\cite{E1991}.
Recently, Buci\'c, Chen, and Ma~\cite{BCM2026} proved that for an integer $k\geq 4$ and $\frac{n^2}{4}+1\leq e\leq \binom{n}{2}$, $\chi_{\mathrm{S}}\left(n,e,C_{2k+1} \right)=\frac{e}{2}+\frac{n}{2}\sqrt{e-\frac{n^2}{4}}+o(n^2),$ confirming the conjecture for all $k\ge 4$ in a stronger form. 
Thus, the case
$k=3$ remains the only
open case.

In addition to odd cycles, several results have been obtained when $L$ is a bipartite graph. Burr, Erd\H os, Frankl, Graham, and S\'os~\cite{BEFGS1991} have investigated both cases regarding whether $L$ contains two strongly independent edges. Burr, Erd\H os, Graham, and S\'os~\cite{BEGS1989} determined the values of $\chi_{\mathrm{S}}\left(n,\lfloor un \rfloor,P_{k} \right)$ for $k=3$, and for the range $u\ge k\ge 5$. Later, S\'ark\"ozy and Selkow~\cite{SS2006} obtained a linear lower bound for connected bipartite graphs other than complete bipartite graphs. 

In this paper, we focus on the case $L=P_4$, the path on four vertices, which is one of the central small bipartite cases in the work of Burr, Erd\H os, Graham, and S\'os~\cite{BEGS1989}. It is closely related to Ruzsa--Szemer\'edi graphs, since an edge-coloring whose color classes are induced matchings automatically guarantees that every $P_4$ is rainbow. Fox, Huang, and Sudakov~\cite{FHS2017} studied Ruzsa-Szemer\'edi graphs whose induced matchings have linear size.

Burr, Erd\H{o}s, Graham, and S\'os~\cite{BEGS1989} posed the following two open problems for $P_4$.
\begin{problem}[Burr, Erd\H os, Graham, and S\'os~\cite{BEGS1989}]\label{problem}
\ 
 Is it true that there exists $C>0$, such that for all $u\ge 1$, $\chi_{\mathrm{S}}\left(n,\lfloor un \rfloor,P_4 \right)<Cu$ holds for all sufficiently large $n$?
 \end{problem}

\begin{problem}[Burr, Erd\H os, Graham, and S\'os~\cite{BEGS1989}]\label{problem-2}
\
 Is it true that for all $\epsilon >0$, there exists $c(\epsilon)>0$ such that for all sufficiently large $n$, $\chi_{\mathrm{S}}\left(n,\binom{n}{2}-\lfloor n^{2-\epsilon} \rfloor,P_4 \right)>c(\epsilon)n^{2}$?  
 \end{problem}
   
Our first result provides a positive answer to Problem~\ref{problem}. 

\begin{theorem}
\label{thm:coarse}
For every fixed $u\ge 1$, there exists $n_0=n_0(u)>0$ such that for $n>n_0$, we have $4u - 7 - o(1)\le \chi_{\mathrm{S}}(n,\lfloor un\rfloor ,P_{4})\le 4\lfloor u\rfloor + 3$. 
Moreover, if $u > 2$, then $4u-1-o(1)\le \chi_{\mathrm{S}}(n,\lfloor un\rfloor ,P_{4})$. 
Here $o(1) \to 0$ as $n \to \infty$, with $u$ fixed.
\end{theorem}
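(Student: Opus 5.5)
We prove the two inequalities separately. The upper bound is a construction; the lower bound is a counting argument on an arbitrary graph with $\lfloor un\rfloor$ edges.

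\textbf{Upper bound.} Set $k=2\lfloor u\rfloor+1$, which is odd. The odd Kneser graph $O_{\lfloor u\rfloor+1}=K(2\lfloor u\rfloor+1,\lfloor u\rfloor)$ is $k$-regular. By the characterization of Lu\v{z}ar, M\'{a}\v{c}ajov\'a, \v{S}koviera, and Sot\'ak, a $k$-regular graph with strong chromatic index exactly $2k-1$ must cover a Kneser-type structure. More usefully here, we expect the odd Kneser graph (or a suitable $k$-regular graph $H$) to have strong chromatic index at most $2k+1 = 4\lfloor u\rfloor+3$.

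Take $G$ to be a disjoint union of copies of $H$, padded with isolated vertices or a partial copy. Then $G$ has about $kn/2=(\lfloor u\rfloor+\tfrac12)n\ge un$ edges once $n$ is large. In a strong edge-coloring every color class is an induced matching, so any $P_4$ is rainbow, because its three edges pairwise lie within distance one. Hence $\chi_{\mathrm S}(G,P_4)\le 4\lfloor u\rfloor+3$.

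The remainder must be handled so that the vertex count is exactly $n$. If $|V(H)|$ does not divide $n$, we add isolated vertices. The padding loses only $O(1)$ edges, and this is absorbed by the slack of $n/2$ edges.

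\textbf{Lower bound.} Let $G$ have $n$ vertices and $e\ge un$ edges, with a coloring in which every $P_4$ is rainbow. The condition forbids only two things within a color class: two adjacent edges $xy,yz$ (together they lie in a $P_4$ whenever $x$ or $z$ has another neighbor), and two edges joined by a third edge. So each color class is essentially an induced matching, apart from degenerate components: stars that form whole components, and triangles.

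First we discard, or bound separately, components with at most $3$ vertices and star components. What remains is a subgraph $G'$ whose coloring is a strong edge-coloring. A strong edge-coloring needs at least $\max_{xy}(d(x)+d(y)-1)$ colors. The key local density inequality we would prove is the following: in any graph with average degree $2u$ (after removing low-degree vertices), some edge $xy$ has $d(x)+d(y)-1\ge 4u-O(1)$. This follows by averaging $d(x)+d(y)$ over the edges, since
\[\sum_{xy\in E}(d(x)+d(y))=\sum_v d(v)^2\ge e\cdot\frac{4e}{n}\]
by Cauchy--Schwarz. Hence some edge satisfies $d(x)+d(y)\ge 4e/n\ge 4u-o(1)$, which gives $4u-1-o(1)$.

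The difficulty is the degenerate components, which carry many edges but need few colors. A star component $K_{1,t}$ needs only $1$ color yet has average-degree contribution near $2$, so it dilutes the density. Cleaning them out costs edges. When $u>2$, the stars and triangles contribute average degree at most $2$, so after deleting them the remaining graph still has density at least $u$ relative to its own vertex count, and we keep $4u-1-o(1)$. When $1\le u\le 2$ we only get the weaker bound after losing a constant: removing these pieces costs at most $n$ edges in the count, which is where $4u-7$ comes from, and we would verify that constant by a careful case analysis.

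\textbf{Main obstacle.} We expect the main obstacle to be the upper bound: we need a $k$-regular graph with strong chromatic index at most $2k+1$. We would first try the odd Kneser graph with an explicit coloring indexed by $2\lfloor u\rfloor+1$ complementary elements, and fall back on the LMŠS characterization to identify it.
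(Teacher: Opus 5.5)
\textbf{Upper bound.} Your construction does not give enough edges.

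First, $\mathrm{O}_{\lfloor u\rfloor+1}=\mathrm{KG}(2\lfloor u\rfloor+1,\lfloor u\rfloor)$ is $(\lfloor u\rfloor+1)$-regular, not $(2\lfloor u\rfloor+1)$-regular. A vertex $A$ is adjacent exactly to the $\lfloor u\rfloor$-subsets of its $(\lfloor u\rfloor+1)$-element complement.

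More seriously, even a genuinely $(2\lfloor u\rfloor+1)$-regular $H$ has edge density only $\lfloor u\rfloor+\tfrac12$. So your ``slack of $n/2$ edges'' is really $(\lfloor u\rfloor+\tfrac12-u)n$. This is negative once $u-\lfloor u\rfloor>\tfrac12$ (e.g.\ $u=1.7$), and zero when $u-\lfloor u\rfloor=\tfrac12$, which leaves nothing to absorb the padding.

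Your colour count $2k+1$ with $k=2\lfloor u\rfloor+1$ equals $2k'-1$ for $k'=k+1$. The graph you want is the $k'$-regular odd graph $\mathrm{O}_{k'}=\mathrm{KG}(4\lfloor u\rfloor+3,2\lfloor u\rfloor+1)$. Its canonical colouring $AB\mapsto[4\lfloor u\rfloor+3]\setminus(A\cup B)$ is a strong edge-colouring with $4\lfloor u\rfloor+3$ colours. Its density $\lfloor u\rfloor+1>u$ gives a linear surplus. Take $\lfloor n/|V(\mathrm{O}_{k'})|\rfloor$ disjoint copies, pad with isolated vertices, and delete edges down to exactly $\lfloor un\rfloor$. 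This is the paper's construction.

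\textbf{Lower bound.} Your parenthetical about removing low-degree vertices points the right way, but the reduction you actually describe is wrong. Deleting star components and components on at most three vertices does not make the colouring strong. Pendant edges, and edges at degree-$2$ vertices inside larger components, may share colours.

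For instance, take $K_{1,t}$ with centre $c$ plus one edge $\ell_1\ell_2$ between two leaves. Colour every $c\ell_j$ with $j\ge3$ with colour $1$, $c\ell_1$ and $c\ell_2$ with colour $2$, and $\ell_1\ell_2$ with colour $3$. Every $P_4$ is rainbow, yet $d(c)+d(\ell_3)-1=t$. So the inequality $d(x)+d(y)-1\le q$ that your Cauchy--Schwarz averaging needs is unjustified.

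The correct reduction is to the $3$-core: repeatedly delete vertices of current degree at most $2$.
\begin{itemize}
\item Each deletion removes at most two edges. So if $e>2n$, the core $H$ is nonempty and $|E(H)|/|V(H)|\ge 2+(e-2n)/|V(H)|\ge e/n$.
\item Because $\delta(H)\ge3$, take any edge $xy$ of $H$. Any two equally coloured edges among those incident with $x$ or $y$ extend, via a third neighbour, to a non-rainbow $P_4$.
\item Hence $q\ge d_H(x)+d_H(y)-1$ for every edge of $H$, and averaging gives $q\ge 4e/n-1$.
\end{itemize}
This loss of two edges per deleted vertex is the real source of the threshold $u>2$. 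Removing whole components of density at most $1$ would only need $u>1$.

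Finally, $4u-7$ requires no case analysis. For $u\le2$ it is at most $1$, so the bound is trivial. For $u>2$ it follows from $4u-1-o(1)$.
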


The sharper statement is governed by the following density thresholds. For $q \in \mathbb{N}$, define
\[
f(q) = \begin{cases}
\frac{d}{2} & \text{if } q = 2d-1,\\[4pt]
\frac{d(d+1)}{2d+1} & \text{if } q = 2d.
\end{cases}
\]

The values $f(2d-1)=d/2$ and $f(2d)=d(d+1)/(2d+1)$ are attained by $\mathrm{KG}(2d-1,d-1)$ and by deleting one color class from the canonical strong edge-coloring of $\mathrm{KG}(2d+1,d)$, respectively.

\begin{theorem}
\label{thm:fixed}
Let $u>0$ be fixed. Define $q_0(u)=\min\{q\in\mathbb N:f(q)\ge u\}$ and $q_+(u)=\min\{q\in\mathbb N:f(q)>u\}$. The following statements hold.

\begin{itemize}
    \item[(i)]
    If $0<u<1$, then $\chi_{\mathrm S}(n,\lfloor un\rfloor,P_4)=1$ for all sufficiently large $n$.

    \item[(ii)]
    If $u=1$, then, for every $n\ge3$,
    \[
    \chi_{\mathrm S}(n,n,P_4)=
    \begin{cases}
        1, & 3\mid n,\\
        3, & 3\nmid n.
    \end{cases}
    \]

    \item[(iii)]
    If $1<u\le 2$, then $3\le\chi_{\mathrm S}(n,\lfloor un\rfloor,P_4)\le q_+(u)$ for all sufficiently large $n$.

    \item[(iv)]
    If $u>2$, then $q_0(u)\le\chi_{\mathrm S}(n,\lfloor un\rfloor,P_4)\le q_+(u)$ for all sufficiently large $n$. In particular, if $2u\notin \mathbb{Z}$ and $u\neq f(2\lfloor 2u\rfloor)$, then $\chi_{\mathrm S}(n,\lfloor un\rfloor,P_4)=q_+(u)$ for all sufficiently large $n$.

    \item[(v)]
    If $u>2$ is an integer and $N_u=\binom{4u-1}{2u-1}$, then, for all sufficiently large $n$,
    \[
    \chi_{\mathrm S}(n,un,P_4)=
    \begin{cases}
        4u-1, & N_u\mid n,\\
        4u, & N_u\nmid n.
    \end{cases}
    \]
\end{itemize}
\end{theorem}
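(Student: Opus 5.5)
The argument splits into a structural lower bound on the number of colors and explicit constructions matching it. The basic observation is this: in a coloring of $G$ where every $P_4$ is rainbow, each color class $M$ is a matching, since two edges of one color sharing a vertex, together with any further edge at an endpoint, would give a non-rainbow $P_4$ once the component has more than a triangle. Components that are triangles or stars are exceptions and are treated separately. Moreover, whenever two edges of $M$ are joined by an edge of $G$ we obtain a non-rainbow $P_4$. So, away from small components, every color class is an induced matching, and a coloring with $q$ colors is a strong edge-coloring. The central local inequality I will prove is that every connected component $H$ of $G$ that admits a strong edge-coloring with $q$ colors satisfies $|E(H)|\le f(q)\,|V(H)|$. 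Once this holds, summing over components gives $e(G)\le f(q)n+O(1)$, where the $O(1)$ comes from exceptional components. This yields $\chi_{\mathrm S}(n,\lfloor un\rfloor,P_4)\ge q_0(u)$ for large $n$, which is (iv) lower; the bound $3$ in (iii) comes from a direct small-case check. Part (i) is immediate from a disjoint union of triangles and paths, whose $P_4$'s are absent.

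To prove the local inequality I proceed as follows. Let $H$ have maximum degree $k$. A vertex of degree $k$ together with its neighbours forces at least $2k-1$ colors, since the $k$ edges at $v$ and the edges at one neighbour are pairwise conflicting. Hence $q\ge 2k-1$, i.e. $k\le\lfloor (q+1)/2\rfloor$, and the average degree is at most $k$. For $q=2d-1$ this gives $e\le dn/2=f(q)n$, with equality only for $d$-regular graphs of strong chromatic index $2d-1$. For $q=2d$ we have $k\le d$, but we must also show that a $d$-regular component cannot use $2d$ colors with density $d/2$ unless it is of the special type. Here I invoke the characterization by Lu\v{z}ar, M\'a\v{c}ajov\'a, \v{S}koviera, and Sot\'ak: a $k$-regular graph has strong chromatic index $2k-1$ iff it is a cover of $\mathrm{KG}(2k-1,k-1)$, which has $\binom{2k-1}{k-1}$ vertices. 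For $q=2d$ and $d$-regular $H$, a counting/discharging argument shows $e(H)\le \frac{d(d+1)}{2d+1}|V(H)|$: each color class is an induced matching, and in the $d$-regular case each vertex misses only one color, which forces the structure of $\mathrm{KG}(2d+1,d)$ minus a color class. I expect this step, the $q=2d$ bound for near-regular graphs and more generally for non-regular $H$ with max degree $d$, to be the main obstacle. I would try a weighting where each color class of size $m_i$ contributes at most $|V|/(2d+1)\cdot$const vertex slots, via double counting pairs (vertex, color absent at its closed neighbourhood).

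For the upper bounds I take disjoint copies of $\mathrm{KG}(2d-1,d-1)$ (density $d/2$, with $2d-1$ colors) and of $\mathrm{KG}(2d+1,d)$ minus a color class (density $f(2d)$, with $2d$ colors). I mix these with sparse filler components so that the total is exactly $n$ vertices and at least $\lfloor un\rfloor$ edges. Using the strict inequality $f(q_+)>u$ gives slack to absorb divisibility, so the number of colors is $q_+(u)$. For (ii) and (v) the equality cases of the local inequality force every component to be extremal. For $u=1$ the extremal components are triangles, hence the dependence on $3\mid n$; otherwise $3$ colors suffice via a $C_4$-type gadget. For integer $u>2$, $f(4u-1)=u$ forces $\mathrm{KG}(4u-1,2u-1)$ components (covers of it have the same density, but the characterization pins the vertex counts to multiples of $N_u$). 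Otherwise one uses $4u$ colors with slack. Theorem~\ref{thm:coarse} then follows from $f(q)\approx q/4$.
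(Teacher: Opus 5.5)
Your overall architecture matches the paper: a density inequality gives the lower bounds, disjoint copies of $\mathrm{O}_d$ and of $\mathrm{KG}(2d+1,d)$ minus a color class give the upper bounds, and Theorem~\ref{thm:Luzar} with Lemma~\ref{fiber} handles (v). However, both steps carrying the lower bound fail as stated.

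\textbf{The local inequality.} A vertex $v$ of maximum degree $k$ and a neighbor $w$ force only $k+d(w)-1$ colors, not $2k-1$. So $q\ge 2k-1$ is false already for $K_{1,k}$. It also fails for your own extremal graph. $\mathrm{KG}(2d+1,d)$ minus a color class has maximum degree $d+1$, attained by the $d$-sets containing the deleted color, which are pairwise non-adjacent. Yet it is strongly $2d$-colorable. Your claim $k\le d$ for $q=2d$ would therefore bound its density by $d/2<f(2d)$, contradicting your upper-bound construction. The sketched discharging for $q=2d$ rests on the same false premise, and that extremal graph is not even $d$-regular. The missing idea is to use only the edge-local constraint $d(x)+d(y)-1\le q$ and sum it over all edges. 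For a graph with $p$ vertices and $m$ edges this gives $\sum_v d(v)^2\le (q+1)m$. Then $(r-d)^2\ge 0$ for $q=2d-1$, and $(r-d)(r-d-1)\ge 0$ for $q=2d$ (valid since $r$ is an integer), yield $m\le f(q)p$ immediately. For odd $q$, equality holds only when the graph is $d$-regular.

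\textbf{The reduction to strong colorings.} Exceptions are not confined to triangle or star components. Two pendant edges at a common vertex of a dense component may share a color. So may the two triangle edges at the degree-$3$ vertex of a triangle with a pendant edge (the paper's $H_4$). What is true is Lemma~\ref{lem:local}, which needs $\delta\ge 3$. The paper reaches that setting by peeling: repeatedly delete vertices of degree at most $2$. Since $e>2n$, the remaining core $H$ is nonempty, has $\delta(H)\ge 3$, and satisfies $|E(H)|/|V(H)|\ge e/n$, with equality only if nothing was deleted. This exactness is essential for (v). An additive $O(1)$ error would suffice for (iv). In (v), though, you need equality in Lemma~\ref{lem:density}(i) to force $G$ itself to be $2u$-regular. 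Only then does Corollary~\ref{cor:strong-equals-rainbow} make the coloring strong, so that Theorem~\ref{thm:Luzar} applies.

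\textbf{Smaller points.}
\begin{itemize}
\item The bound $3$ in (ii) and (iii) comes from the Erd\H{o}s--Gallai value of $\mathrm{ex}(n,P_4)$, together with the fact that a rainbow $P_4$ needs three colors. A ``small-case check'' does not give it.
\item No gadget containing a $C_4$ can work in (ii). Any two edges of $C_4$ lie on a common $P_4$, which forces $4$ colors. Use a triangle with a pendant edge or a pendant path of length two instead.
\item In (i), the paths must have at most two edges.
\end{itemize}
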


Our second result concerns Problem~\ref{problem-2}, which asks whether deleting only $n^{2-\epsilon}$ edges from $K_n$ still forces quadratically many colors for every fixed $\epsilon>0$. The answer is negative for $0<\epsilon < 1/2$.

\begin{theorem}
\label{thm:near}
For every fixed $0<\epsilon < 1/2$, there exists $\gamma = \gamma(\epsilon) > 0$ such that for arbitrarily large $n$, $\chi_{\mathrm{S}}\left(n,\binom{n}{2} - \lfloor n^{2-\epsilon}\rfloor ,P_{4}\right) \le n^{2-\gamma} = o(n^{2})$. 
Moreover, one may take $\gamma = \Omega((1/2 - \epsilon)^{3})$.
\end{theorem}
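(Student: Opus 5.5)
We need to prove Theorem~\ref{thm:near}: we want a graph on $n$ vertices, missing at most $n^{2-\epsilon}$ edges, whose edges can be colored with $n^{2-\gamma}$ colors so that every $P_4$ is rainbow. The natural choice is to make every color class an induced matching of $G$. Then any two edges of the same color are disjoint and no edge of $G$ joins them, so no $P_4$ (or even $P_3$) contains two edges of one color. So it suffices to find a graph $G$ on $n$ vertices with at least $\binom n2-n^{2-\epsilon}$ edges whose edge set decomposes into at most $n^{2-\gamma}$ induced matchings. This is exactly the dense Ruzsa--Szemer\'edi regime handled by the construction of Alon, Moitra, and Sudakov. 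Their result gives graphs on $n$ vertices with $\binom n2-o(n^2)$ edges that are a union of $n^{1+c/\log\log n}$ induced matchings. More flexibly, they give graphs in which $\binom n2 - n^{2-\eta}$ edges are covered by $n^{2-\delta}$ induced matchings, with a polynomial tradeoff between $\eta$ and $\delta$.

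The plan is to invoke (or rederive) the quantitative version of the AMS construction. I recall its structure as follows. Vertices are points of $[C]^m$, or vectors in a finite field setting. Two points $x,y$ are joined according to a norm-type condition, such as $\|x-y\|^2$ lying in a prescribed window, with edges "colored" by the pair (midpoint-type data, distance-shell). Each class $\{x,y : x+y=z\}$ restricted to a thin shell is an induced matching, by the strict convexity/concentration of the norm: for fixed $z$, any two pairs $\{x,y\}$ and $\{x',y'\}$ with $x+y=x'+y'=z$ at nearly maximal distance have $\|x-x'\|$ or $\|x-y'\|$ larger than the shell allows. Concentration of measure says a random pair of points has $\|x-y\|^2$ in a window of width about $\sqrt m$ around its mean, so deleting pairs outside a slightly larger window loses only an $e^{-t^2}$ fraction of edges.

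The steps, in order, are as follows.
\begin{enumerate}
\item Set parameters $C$ and $m$ with $n=C^m$. Choose the window so that the deleted fraction is at most $n^{-\epsilon}$; this needs a window width about $\sqrt{\epsilon m\log C}$ standard deviations.
\item Partition the window into $k$ subshells, so that within each subshell the induced-matching property holds for a fixed midpoint $z$.
\item Count the colors: $(\#z)\cdot k \approx (2C)^m k$. This must be $n^{2-\gamma}$, i.e.\ roughly $2^m k \le C^{m(1-\gamma)}$, which holds when $C$ is large.
\end{enumerate}

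The main obstacle is step (2), and specifically the induced-matching verification. Two pairs with the same sum $z$ and squared lengths in the same subshell must not be joined by an edge of $G$ (in the full window). By the parallelogram law, $\|x-x'\|^2+\|x-y'\|^2 = \tfrac12(\|x-y\|^2+\|x'-y'\|^2)+\ldots$, and the cross distances can fall inside the big window. The remedy I would try first is the AMS trick of also restricting edges to those where each coordinate difference is nonzero, or to a product structure. This forces the cross distances to exceed the top of the window by an amount larger than the window width. Making this gap exceed the window width $\sqrt{\epsilon m}\cdot(\text{scale})$ is what forces $\epsilon<1/2$: the deleted fraction $n^{-\epsilon}$ needs a window of $\approx\sqrt{\epsilon}$ relative to a scale where the gap is only order $\sqrt{m}$-compatible when $\epsilon<1/2$. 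Balancing $k$, $C$, and the gap, with slack $1/2-\epsilon$, should give $\gamma=\Omega((1/2-\epsilon)^3)$, with one factor of the slack coming from each of the three parameter choices. Finally, the construction only exists for $n=C^m$, which is why the statement is for arbitrarily large $n$. If necessary, we pad by deleting a few vertices, which is harmless.
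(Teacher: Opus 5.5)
Your reduction is correct and is exactly the paper's Lemma~\ref{lem:cover}. Your overall plan, using an Alon--Moitra--Sudakov graph, is also the paper's plan. The gap is that you never pin down the quantitative input the theorem actually rests on.

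\textbf{What is missing.} The first version you quote only guarantees $\binom n2-o(n^2)$ edges, which says nothing about $n^{2-\epsilon}$ missing edges. The ``polynomial tradeoff'' version is left unspecified. So neither the range $\epsilon<1/2$ nor $\gamma=\Omega((1/2-\epsilon)^3)$ follows from what you wrote. The paper uses the precise statement, Theorem~\ref{thm:Alon}: for every $\eta>0$ and arbitrarily large $N$ there is an $N$-vertex graph missing at most $N^{3/2+\eta}$ edges and covered by at most $N^{2-c\eta^3}$ induced matchings. With $\eta=(1/2-\epsilon)/2$ one has $3/2+\eta<2-\epsilon$. Deleting further edges (not vertices) until exactly $\lfloor n^{2-\epsilon}\rfloor$ are missing keeps the cover by induced matchings, and $\gamma=c\eta^3$. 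That citation is the entire proof, and the exponent $3/2$ in it is where the restriction $\epsilon<1/2$ comes from.

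\textbf{Why your rederivation does not supply this.} Two of its steps fail.
\begin{itemize}
\item The remedy in step 2 is impossible. If $x+y=x'+y'=z$, the parallelogram identity gives exactly $\|x-x'\|^2+\|x-y'\|^2=\tfrac12\bigl(\|x-y\|^2+\|x'-y'\|^2\bigr)$. So neither cross distance can exceed the top of the window, and no restriction such as ``all coordinate differences nonzero'' can push them above it. The real obstruction is that the larger cross distance falls inside the window whenever the smaller one is at most about twice the window width. Nothing in your sketch rules out such near-coincident pairs within a class.
\item Step 1 does not give a thin shell. A deleted fraction $n^{-\epsilon}=C^{-\epsilon m}$ needs about $\sqrt{\epsilon m\log C}$ standard deviations. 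That is a half-width of order $C^2m\sqrt{\epsilon\log C}$, a constant fraction (independent of $m$) of the mean $\mu\approx mC^2/6$. It even exceeds $\mu$ when $C$ is large, which your step 3 invokes. So you are in the large-deviation regime, and the classes indexed by $z$ and a subshell are far from induced.
\end{itemize}
Your explanations of why $1/2$ appears and of the cube in $\gamma$ are heuristics, not derivations. Either quote the AMS theorem in the exact form above, or you would need to reproduce their considerably more delicate construction.
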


\noindent
{\bf \underline{Proof Sketch.}}
In the proofs of Theorems~\ref{thm:coarse} and~\ref{thm:fixed}, the proofs proceed in two directions. In order to show the lower bound of $\chi_{\mathrm{S}}\left(n,\lfloor un \rfloor ,P_{4}\right)$, we observe that if $G$ is a $q$-edge-colored graph in which every $P_4$ is rainbow, then for any edge $xy\in E(G)$, all edges incident with $x$ or $y$ are assigned distinct colors. With this observation, we obtain a density inequality for these $q$-edge-colored graphs (see Lemma~\ref{lem:density}), which will give the lower bound. 
For the upper bound, we construct graphs with prescribed density and strong chromatic index. Starting from a given graph $F$, we take several disjoint copies and delete edges, which are operations that do not increase the density or the strong chromatic index. Applying this process to the graph $F_q$, which attains the bound in Lemma~\ref{lem:density}, yields the desired graph. When $u>2$ is an integer and $\binom{4u-1}{2u-1}\nmid n$, the density inequality forces any graph attaining $4u-1$ colors to be $2u$-regular with strong chromatic index $4u-1$ (see Theorem~\ref{thm:Luzar}), the characterization of Lu\v{z}ar, M\'{a}\v{c}ajov\'a, \v{S}koviera, and Sot\'ak~\cite{LMSS2022} for $k$-regular graphs with strong chromatic index $2k-1$, then implies that such a graph must cover the odd Kneser graph $\mathrm{KG}(4u-1,2u-1)$, which by Lemma~\ref{fiber} forces $\binom{4u-1}{2u-1}\mid n$, a contradiction. Hence $\chi_{\mathrm{S}}(n,un,P_4)=4u$ in this case.

To prove Theorem~\ref{thm:near}, we use a construction of Alon, Moitra, and Sudakov~\cite{AMS2013}. The constructed graph $H_n$ has $n^{2-\epsilon}$ fewer edges than the complete graph $K_n$ (on the same vertex set) and can be covered by $o(n^2)$ induced matchings. This yields the desired conclusion. 

\noindent
{\bf \underline{Organization}}
The paper is organized as follows. Section \ref{sec2} gives all preliminaries for the proofs. In Section \ref{sec3}, we prove Theorems \ref{thm:coarse}, \ref{thm:fixed}, and \ref{thm:near}. 

\section{Preliminaries}\label{sec2}

All graphs are finite and simple, and copies of a graph are not required to be induced. For a graph $G$, let $V(G)$, $E(G)$, $N_{G}(v)$ and $d_G(v)$ denote its vertex set, edge set, the neighborhood of $v$ and the degree of $v$. When the graph is clear, we write $N(v)$ and $d(v)$. An edge set $M \subseteq E(G)$ is an \emph{induced matching} if the subgraph induced by the endpoints of edges in $M$ is exactly the matching $M$. A \emph{strong edge-coloring} is an edge-coloring whose color classes are induced matchings. For a graph $G$, the \emph{strong chromatic index} of $G$, denoted by $\chi^\prime_s(G)$, is the least number of colors in a strong edge-coloring of $G$. 

The following lemma shows a simple relationship between the strong chromatic index and the function $\chi_{\mathrm{S}}(G,P_4)$. 
\begin{lemma}
\label{lem:cover} 
If the edge set of a graph $G$ can be covered by at most $q$ induced matchings, then $\chi_{\mathrm{S}}(G,P_4)\le q$. In particular, $\chi_{\mathrm{S}}(G,P_4) \le \chi^\prime_s(G)$.
\end{lemma}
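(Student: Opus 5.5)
The plan is to turn a cover by induced matchings into a coloring directly. Let $M_1,\dots,M_q$ be induced matchings whose union is $E(G)$. For each edge $e$, give it the color $c(e)=\min\{i : e\in M_i\}$. This uses at most $q$ colors, and each color class $c^{-1}(i)$ is a subset of $M_i$. A subset of an induced matching is again an induced matching: the subgraph induced by its endpoints is contained in the subgraph induced by the endpoints of $M_i$, which consists only of the edges of $M_i$, and among those only the edges of the subset join two of these endpoints. So $c$ is a strong edge-coloring with at most $q$ colors.

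It then remains to check that every copy of $P_4$ is rainbow under any strong edge-coloring. Take a copy with vertices $a,b,c,d$ and edges $ab, bc, cd$, and suppose two of its edges share a color $i$.

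If the two edges are adjacent, say $ab$ and $bc$, they meet at $b$. Then the color class $c^{-1}(i)$ is not a matching, which is a contradiction.

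If the two edges are $ab$ and $cd$, they are disjoint. But the endpoints $a,b,c,d$ of the class $c^{-1}(i)$ induce the edge $bc$, which does not belong to $\{ab,cd\}$. This contradicts inducedness. (If $bc$ itself had color $i$, we would already be in the adjacent case.)

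Hence all three edges of the copy receive distinct colors, and so $\chi_{\mathrm S}(G,P_4)\le q$.

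For the "in particular" statement, note that the color classes of an optimal strong edge-coloring form a cover of $E(G)$ by $\chi'_s(G)$ induced matchings. Applying the first part then gives $\chi_{\mathrm S}(G,P_4)\le\chi'_s(G)$.

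There is no real obstacle here. The only point needing care is that a cover may overlap, so we must pass to a partition. This is why we use the minimum-index rule and the fact that induced matchings are closed under taking subsets.
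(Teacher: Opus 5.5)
Your proposal is correct and follows essentially the same route as the paper: you turn the cover into a coloring by assigning each edge to one matching that contains it, note that subsets of induced matchings are still induced matchings, and observe that two edges of a $P_4$ are either adjacent or joined by the middle edge. Your minimum-index rule and the explicit check of the two cases just spell out steps the paper states briefly.
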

\begin{proof}
Assign to every edge one of the induced matchings containing it, and color the resulting subsets with the corresponding colors. Subsets of induced matchings are induced matchings. In a copy of $P_4$, any two edges are either adjacent or are the two end-edges joined by the middle edge. Hence two edges of one $P_4$ cannot lie in the same induced matching, and every $P_4$ is rainbow.
\end{proof}

\subsection{Kneser graph and odd graph}
We use Kneser graphs in the standard notation: 
\begin{definition}
    For positive integers $m\ge r$, the \emph{Kneser graph} $\mathrm{KG}(m,r)$ is defined as follows: 
    \begin{align*}
        V(\mathrm{KG}(m,r))&= \binom{[m]}{r}, \\
        E(\mathrm{KG}(m,r))&= \left\{ AB: A, B\in \binom{[m]}{r} \mbox{ and } A\cap B=\emptyset \right\}. 
    \end{align*}
\end{definition}
\begin{definition}
    For $k \ge 2$, we define the \emph{odd graph} $\mathrm{O}_k=\mathrm{KG}(2k-1,k-1)$. 
\end{definition}
The following property of the odd graph $\mathrm{O}_k$ follows directly from the definition.
\begin{proposition}
    \label{prop:Kneser}
    For $k \ge 2$, the odd graph $\mathrm{O}_k$ is a $k$-regular connected graph with $|V(\mathrm{O}_k)|=\binom{2k-1}{k-1}$. 
\end{proposition}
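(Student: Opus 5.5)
We need to prove Proposition \ref{prop:Kneser}: for $k\ge 2$, $\mathrm{O}_k=\mathrm{KG}(2k-1,k-1)$ is $k$-regular, connected, and has $\binom{2k-1}{k-1}$ vertices. The plan is to verify the three claims in turn, with connectivity as the only step needing an actual argument.

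The vertex count is immediate from the definition, since $V(\mathrm{O}_k)=\binom{[2k-1]}{k-1}$. For regularity, fix a vertex $A\in\binom{[2k-1]}{k-1}$. Its neighbours are exactly the $(k-1)$-subsets of the complement $[2k-1]\setminus A$. This complement has $(2k-1)-(k-1)=k$ elements, so there are $\binom{k}{k-1}=k$ neighbours. Equivalently, each neighbour has the form $([2k-1]\setminus A)\setminus\{x\}$ for a unique $x\notin A$.

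The main step is connectivity. We compose two adjacencies: going from $A$ to $B=([2k-1]\setminus A)\setminus\{x\}$ and then to $C=([2k-1]\setminus B)\setminus\{y\}$, where $y\in[2k-1]\setminus B = A\cup\{x\}$. This gives $C=(A\cup\{x\})\setminus\{y\}$. Choosing $y\in A$ shows that $A$ is joined by a walk of length $2$ to every set obtained from $A$ by swapping one element $y\in A$ for one element $x\notin A$. Any two $(k-1)$-subsets of $[2k-1]$ are connected by a finite sequence of such single-element swaps: replace elements of $A\setminus A'$ by elements of $A'\setminus A$ one at a time. Hence the graph is connected.

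The case $k=2$ deserves a sanity check, since then $\mathrm{O}_2=\mathrm{KG}(3,1)=K_3$, which is $2$-regular, connected, and has $3$ vertices. The swap argument covers this case as well. I expect no real obstacle; the only point needing care is the two-step swap identity used for connectivity.
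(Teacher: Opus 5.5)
Your proposal is correct and matches the paper's approach: the paper simply states that the proposition follows directly from the definition and gives no argument. Your complement count for regularity, and the two-step walk $A \to B \to (A\setminus\{y\})\cup\{x\}$ that reduces connectivity to single-element swaps, supply exactly the routine verification the paper leaves out.
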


Faudree, Schelp, Gy\'arf\'as, and Tuza~\cite{FSGT1990}
determined the strong chromatic index of Kneser graphs; in particular, $\chi'_s(\mathrm{O}_k)=2k-1$. The following lemma recalls the canonical
optimal strong edge-coloring of the odd graph \(\mathrm{O}_k\), described
in~\cite[Section~2]{LMSS2022}; this coloring is unique up to automorphism
of \(\mathrm{O}_k\).
\begin{lemma}[\cite{LMSS2022}]
    \label{odd graph3}
    Let $k\ge 2$. For an edge $AB$ of $\mathrm{O}_k$, where $A$ and $B$ are disjoint $(k-1)$-subsets of $[2k-1]$, we define $\sigma(AB) = [2k-1] \setminus (A \cup B)$. Then $\sigma : E(\mathrm{O}_k)\to [2k-1]$ gives a strong $(2k-1)$-edge-coloring of $\mathrm{O}_k$, and it is the unique strong $(2k-1)$-edge-coloring of $\mathrm{O}_k$ up to automorphism of $\mathrm{O}_k$.
\end{lemma}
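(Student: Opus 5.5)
The first claim (that $\sigma$ is a strong edge-coloring) is a direct check, and for uniqueness I plan to read off from an arbitrary strong coloring a map $O_k\to O_k$ and show it is an automorphism.

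\textbf{$\sigma$ is well defined and strong.} For an edge $AB$ the sets $A,B$ are disjoint $(k-1)$-subsets of $[2k-1]$. So $|A\cup B|=2k-2$, and $\sigma(AB)$ is a single element of $[2k-1]$.

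Suppose $AB$ and $CD$ both receive colour $i$, and put $S=[2k-1]\setminus\{i\}$, so that $A\cup B=C\cup D=S$.
\begin{itemize}
\item If the two edges share a vertex, say $A=C$, then $B=S\setminus A=D$, so they are the same edge.
\item If some vertex of one edge is adjacent to some vertex of the other, say $A\cap C=\emptyset$, then $A$ and $C$ are disjoint $(k-1)$-subsets of the $(2k-2)$-set $S$. Hence $C=S\setminus A=B$, and the edges share a vertex, which reduces to the previous case.
\end{itemize}
So each colour class is an induced matching.

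It is also worth recording the colours seen at a vertex $A$. Its neighbours are the sets $([2k-1]\setminus A)\setminus\{j\}$ for $j\notin A$, and the edge to such a neighbour has colour $j$. So the colours at $A$ are exactly $[2k-1]\setminus A$.

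\textbf{Uniqueness.} Let $c$ be any strong $(2k-1)$-edge-coloring of $O_k$ with palette $[2k-1]$.

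1. For an edge $xy$, the $2k-1$ edges incident to $x$ or $y$ pairwise lie in a common $P_4$ or share a vertex. They therefore get distinct colours, so they use every colour exactly once.

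2. Let $C(x)$ be the set of colours at $x$, and $M(x)=[2k-1]\setminus C(x)$, a $(k-1)$-set. By step 1, $C(x)\cup C(y)=[2k-1]$ for adjacent $x,y$, so $M(x)\cap M(y)=\emptyset$. Thus $\varphi:x\mapsto M(x)$ is a graph homomorphism $O_k\to O_k$.

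3. We have $c(xy)\in C(x)\cap C(y)$, so $c(xy)\notin M(x)\cup M(y)$. This gives $c(xy)=\sigma(\varphi(x)\varphi(y))$.

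4. $\varphi$ is a local bijection. If $y$ is the neighbour of $x$ with $c(xy)=j$, then $M(y)$ is a $(k-1)$-set disjoint from $M(x)\cup\{j\}$, a $k$-set. Hence $M(y)=[2k-1]\setminus(M(x)\cup\{j\})$, which is distinct for distinct $j$. So $\varphi$ maps $N(x)$ bijectively onto $N(\varphi(x))$, and is a covering map.

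5. By Proposition~\ref{prop:Kneser}, $O_k$ is connected. A covering onto a connected graph is surjective with fibres of constant size, and here source and target have the same number of vertices, so every fibre has size $1$. Hence $\varphi$ is an automorphism and $c=\sigma\circ\varphi$.

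The main step is noticing the missing-colour map $M$ of step 2. Once it is defined, the covering argument in steps 4 and 5 is routine.
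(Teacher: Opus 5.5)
Your proof is correct. The paper gives no argument for this lemma: it imports both the strong-colouring property of $\sigma$ and its uniqueness from \cite{LMSS2022}. So your proposal is a self-contained replacement for a citation rather than a variant of a proof in the paper.

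It is worth seeing how your route relates to Theorem~\ref{thm:Luzar}, which the paper also quotes. Steps 1--4 use nothing about the source graph except $k$-regularity. So for \emph{any} $k$-regular graph with a strong $(2k-1)$-edge-colouring, $x\mapsto M(x)$ is a locally bijective homomorphism into $\mathrm{O}_k$. In other words, you have essentially reproved the ``only if'' half of Theorem~\ref{thm:Luzar}, and uniqueness is the special case where the cover has the same order as the base.

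Besides being self-contained, your argument gives a slightly sharper conclusion than the lemma states. Every strong colouring with palette $[2k-1]$ equals $\sigma\circ\varphi$ for an automorphism $\varphi$, with no separate relabelling of colours. This is consistent with the fact that a permutation $\pi$ of $[2k-1]$ induces an automorphism satisfying $\sigma(\pi A\,\pi B)=\pi(\sigma(AB))$.

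Two small points in steps 4--5 should be tightened.
\begin{enumerate}
\item Under the paper's definition a covering projection is surjective. Calling $\varphi$ a covering in step 4 and then deducing surjectivity from ``a covering onto a connected graph is surjective'' is therefore circular. What you need is that a locally bijective homomorphism into a connected graph is surjective. This holds because its image is nonempty and contains every neighbour of each of its points.
\item Once surjectivity is known, Lemma~\ref{fiber} is unnecessary, since a surjection of a finite set onto itself is a bijection. You should, however, add the line showing that a bijective local bijection is an automorphism: injectivity together with $\varphi(N(x))=N(\varphi(x))$ gives $xy\in E(\mathrm{O}_k)$ if and only if $\varphi(x)\varphi(y)\in E(\mathrm{O}_k)$.
\end{enumerate}
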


\subsection{Density inequality}
All lower bounds rest on the next local observation.

\begin{lemma}
\label{lem:local}
Let $G$ be an edge-colored graph in which every $P_4$ is rainbow, and suppose $\delta(G) \ge 3$. Then, for every edge $xy \in E(G)$, we have all edges incident with $x$ or $y$ have pairwise distinct colors. In particular, $d_G(x) + d_G(y) - 1$ colors appear on these edges.
\end{lemma}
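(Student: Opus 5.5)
The plan is to show that any two distinct edges $e,f$ incident with $x$ or $y$ lie together in some copy of $P_4$ in $G$; since every $P_4$ is rainbow, $e$ and $f$ then receive different colors. The edges incident with $x$ or $y$ are $xy$, the $d(x)-1$ edges $xa$ with $a\ne y$, and the $d(y)-1$ edges $yb$ with $b\ne x$. These are $d(x)+d(y)-1$ distinct edges, because $xa=yb$ would force $\{x,a\}=\{y,b\}$, i.e.\ $a=y$, which is excluded. So the count in the ``in particular'' part follows once pairwise distinctness is proved.

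I would split the pairs into three cases.

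\emph{Case 1: $xy$ and $xa$ with $a\ne y$.} Since $d(y)\ge 3$, choose a neighbor $b$ of $y$ with $b\notin\{x,a\}$. Then $a\,x\,y\,b$ is a path on four distinct vertices containing both edges. The case $xy$ and $yb$ is symmetric.

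\emph{Case 2: $xa$ and $yb$ with $a\ne y$, $b\ne x$.} If $a\ne b$, then $a\,x\,y\,b$ is a $P_4$ containing both edges. If $a=b$, then $x,y,a$ form a triangle. In that case use $d(a)\ge 3$ to pick a neighbor $c$ of $a$ with $c\notin\{x,y\}$; then $c\,a\,x\,y$ is not useful, but $x\,a\,y$ extended by an edge works. Concretely, $y\,x\,a\,c$ does not contain $ya$, so I would take $x\,a\,y\,z$ for some $z\in N(y)\setminus\{x,a\}$, which exists since $d(y)\ge 3$. This path contains $xa$ and $ay=yb$.

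\emph{Case 3: two edges $xa,xa'$ at the same endpoint, with $a,a'\ne y$.} These edges form a path $a\,x\,a'$ of length two. Extend it to a $P_4$ at $a'$ by a neighbor $w\notin\{x,a\}$ of $a'$, which exists since $d(a')\ge3$. The path $a\,x\,a'\,w$ then contains both edges. Edges at $y$ are handled symmetrically.

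The only delicate point is making sure the extending vertex avoids the vertices already on the path. Minimum degree $3$ gives exactly enough room, since at most two vertices need to be avoided besides the path neighbor. For the triangle subcase I would double-check that the chosen $P_4$ really contains both target edges. With those checks done, every pair is covered and the lemma follows.
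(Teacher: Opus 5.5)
Your proposal is correct and follows essentially the same case analysis as the paper: $xy$ with a side edge, two side edges at the same endpoint, and one side edge at each endpoint with a triangle subcase, each time using $\delta(G)\ge 3$ to extend to a $P_4$ containing both edges. The only difference is cosmetic: in the triangle subcase you use $x\,a\,y\,z$ with $z\in N(y)\setminus\{x,a\}$, where the paper uses $r\,x\,a\,y$ with $r\in N(x)\setminus\{y,a\}$, and you should delete the abandoned attempts $c\,a\,x\,y$ and $y\,x\,a\,c$ from the write-up.
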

\begin{proof}
Suppose that two distinct edges incident with $x$ or $y$ have the same color. If they are $xy$ and $xz$, choose $w \in N(y) \setminus \{x, z\}$; then $zxyw$ is a non-rainbow $P_4$. The case where the two edges are $xy$ and $yz$ is symmetric.

If the two edges are $xz$ and $xz'$, choose $w\in N(z')\setminus \{x,z\}$; then $z x z' w$ is a non-rainbow $P_4$. The case at $y$ is analogous. Finally, let the two edges be $xz$ and $yw$. If $z\neq w$, then $z x y w$ is a non-rainbow $P_4$. If $z = w$, choose $r\in N(x)\setminus \{y,z\}$; then $r x z y$ is a non-rainbow $P_4$. This contradiction proves the lemma.
\end{proof}

\begin{corollary}\label{cor:strong-equals-rainbow}
If $G$ is a graph with $\delta(G)\ge3$, then $\chi^\prime_s(G)=\chi_{\mathrm{S}}(G,P_4)$.
\end{corollary}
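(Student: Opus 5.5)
We prove the two inequalities $\chi^\prime_s(G)\le \chi_{\mathrm{S}}(G,P_4)$ and $\chi_{\mathrm{S}}(G,P_4)\le \chi^\prime_s(G)$ separately. The second is exactly the ``in particular'' part of Lemma~\ref{lem:cover}, so nothing more is needed there.

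For the reverse inequality, fix an optimal edge-coloring of $G$ in which every copy of $P_4$ is rainbow, using $\chi_{\mathrm{S}}(G,P_4)$ colors. It suffices to show that every color class is an induced matching; the coloring is then strong and $\chi^\prime_s(G)\le\chi_{\mathrm{S}}(G,P_4)$. Let $e=xy$ and $f$ be two distinct edges of the same color; there are two cases.

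\emph{The edges share a vertex.} Then $f$ is incident with $x$ or $y$. Since $\delta(G)\ge 3$, Lemma~\ref{lem:local} applies to the edge $xy$ and says all edges incident with $x$ or $y$ have pairwise distinct colors. This contradicts $e$ and $f$ having the same color.

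\emph{The edges are disjoint.} Write $f=zw$, where $\{x,y\}\cap\{z,w\}=\emptyset$. Suppose some edge of $G$ joins an endpoint of $e$ to an endpoint of $f$, say $yz\in E(G)$ after relabeling. Then $yz$ is an edge, and both $xy$ and $zw$ are incident with $y$ or $z$. Applying Lemma~\ref{lem:local} to the edge $yz$ (again using $\delta(G)\ge3$), these two edges must receive distinct colors, a contradiction. One could equally note that $xyzw$ is a non-rainbow $P_4$. Hence no edge joins $\{x,y\}$ to $\{z,w\}$.

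Together the two cases show each color class is a matching whose endpoint set induces no further edges, i.e.\ an induced matching. The argument is entirely routine, since the minimum degree hypothesis has already been absorbed into Lemma~\ref{lem:local}. The only point needing care is the case split, which must cover both adjacent and nonadjacent same-colored pairs.
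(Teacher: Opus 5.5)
Your proof is correct and follows the paper's argument essentially verbatim. You take $\chi_{\mathrm{S}}(G,P_4)\le\chi^\prime_s(G)$ from Lemma~\ref{lem:cover}, and for the converse you apply Lemma~\ref{lem:local} either to one of two adjacent same-colored edges or to the edge joining two disjoint same-colored edges, so every color class is an induced matching. Your side remark that $xyzw$ is itself a non-rainbow $P_4$ is also valid, and shows the disjoint case does not even need $\delta(G)\ge 3$.
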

\begin{proof}
By Lemma~\ref{lem:cover}, we have $\chi_{\mathrm{S}}(G,P_4)\le \chi^\prime_s(G)$. 
Conversely, let $\varphi$ be an edge-coloring of $G$ with $\chi_{\mathrm{S}}(G,P_4)$ colors in which every copy of $P_4$ is rainbow. We show that every color class of $\varphi$ is an induced matching. First, two adjacent edges cannot have the same color, since applying
Lemma \ref{lem:local} to either
of the two adjacent
edges gives a contradiction.
Second, suppose that two disjoint edges of the same color have an edge $xy$ joining their endpoints.
Then these two same-colored edges are both incident with $x$ or $y$. Applying Lemma~\ref{lem:local}
to the edge $xy$ gives a contradiction. Hence each color class is an induced matching, so $\varphi$ is a strong edge-coloring. Therefore $\chi^\prime_s(G)\le \chi_{\mathrm{S}}(G,P_4).$ Together with the first inequality, this proves the equality.
\end{proof}

Recall that for $q \in \mathbb{N}$, $f(q)$ is defined as 
\[
f(q) = \begin{cases}
\frac{d}{2} & \text{if } q = 2d-1,\\[4pt]
\frac{d(d+1)}{2d+1} & \text{if } q = 2d.
\end{cases}
\]
We shall use the fact that $f(q)$ is strictly increasing. Indeed, $\frac{d}{2} < \frac{d(d+1)}{2d+1} < \frac{d+1}{2}$ for every $d \ge 1$.

\begin{lemma}
\label{lem:density}
Let $G$ be an $n$-vertex graph with $e > 2n$ edges. If $G$ has an edge-coloring with at most $q$ colors in which every $P_4$ is rainbow, then we have 
\begin{itemize}
    \item[(i)] $q\ge \frac{4e}{n}-1$. The equality holds if and only if $G$ is a $d$-regular graph with $d\ge 3$ and $q=2d-1$. 
    \item[(ii)] $\frac{e}{n} \le f(q)$. 
\end{itemize}
\end{lemma}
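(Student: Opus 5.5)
The plan is to reduce to a subgraph of minimum degree at least $3$, so that Lemma~\ref{lem:local} applies, and then turn the local constraint $d(x)+d(y)-1\le q$ into two global counting inequalities: a Cauchy--Schwarz bound for (i) and a harmonic-mean bound for (ii).

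\emph{Reduction.} While the current graph has a vertex of degree at most $2$, delete it. If a graph has $e'$ edges, $n'$ vertices and $e'/n'>2$, then deleting a vertex of degree at most $2$ gives
\[
\frac{e'-2}{n'-1}\ \ge\ \frac{e'}{n'},
\]
with strict inequality unless the deleted vertex has degree exactly $2$ and $e'=2n'$. Since we start with $e/n>2$ and the ratio never decreases, it stays above $2$ throughout, so it is strictly increased by every deletion. In particular the process never empties the graph, because the ratio is bounded away from $0$. It ends with a nonempty subgraph $H$ with $\delta(H)\ge 3$ and $e(H)/|V(H)|\ge e/n$, and the inequality is strict if any vertex was removed. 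The restricted coloring still uses at most $q$ colors and makes every $P_4$ of $H$ rainbow. So by Lemma~\ref{lem:local}, every edge $xy$ of $H$ satisfies $d_H(x)+d_H(y)-1\le q$.

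\emph{Part (i).} Sum the local inequality over all edges of $H$. Write $e_H=e(H)$ and $n_H=|V(H)|$. Then
\[
e_H(q+1)\ \ge\ \sum_{xy\in E(H)}\bigl(d(x)+d(y)\bigr)\ =\ \sum_{v} d(v)^2\ \ge\ \frac{4e_H^2}{n_H},
\]
where the last step is Cauchy--Schwarz. Hence $q\ge 4e_H/n_H-1\ge 4e/n-1$.

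For equality in (i), all of the following must hold:
\begin{itemize}
\item no vertex was deleted, so $H=G$ and $\delta(G)\ge 3$;
\item Cauchy--Schwarz is tight, so $G$ is $d$-regular;
\item $d(x)+d(y)-1=q$ on every edge, so $q=2d-1$.
\end{itemize}
Conversely, a $d$-regular $G$ with $q=2d-1$ has $4e/n-1=2d-1=q$. Together with $\delta(G)\ge 3$ this gives $d\ge3$, which is the stated characterization.

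\emph{Part (ii).} Use the identity $n_H=\sum_{xy\in E(H)}\bigl(1/d(x)+1/d(y)\bigr)$, which holds because each vertex $v$ receives $1/d(v)$ from each of its $d(v)$ edges. Writing $a=d(x)$ and $b=d(y)$, this gives
\[
\frac{e_H}{n_H}\ \le\ \max_{xy\in E(H)}\frac{ab}{a+b},
\]
since a ratio of sums is at most the largest termwise ratio. So it suffices to maximize $ab/(a+b)$ over positive integers with $a+b\le q+1$.
\begin{itemize}
\item For a fixed sum $s=a+b$, the quantity $ab/s$ is maximized by the most balanced split.
\item The function $s\mapsto \lfloor s/2\rfloor\lceil s/2\rceil/s$ is nondecreasing in $s$: it equals $s/4$ for even $s$ and $(s^2-1)/(4s)$ for odd $s$.
\item Therefore the maximum is attained at $s=q+1$.
\end{itemize}
For $q=2d-1$ this gives $d\cdot d/(2d)=d/2$. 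For $q=2d$ it gives $d(d+1)/(2d+1)$. In both cases the bound is $f(q)$, so $e/n\le e_H/n_H\le f(q)$.

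The main obstacle is part (ii) for even $q$, where the bound $q\ge 4e/n-1$ from (i) is too weak. The harmonic identity above is what sharpens it. The only other care needed is in the reduction step: the ratio must not drop and the graph must not become empty, and both follow from the hypothesis $e>2n$.
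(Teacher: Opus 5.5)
Your proof is correct. The reduction to a subgraph $H$ with $\delta(H)\ge 3$ and part (i) follow the paper's argument. The paper counts globally, via $m\ge e-2(n-p)$ and $(n-p)(e-2n)\ge 0$, instead of tracking the ratio vertex by vertex, but the idea and the equality analysis are the same.

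Part (ii) takes a genuinely different route. The paper stays with the single aggregated inequality $\sum_{v} d_H(v)^2\le (q+1)m$ already used in (i). It bounds $\sum_v d_H(v)^2$ from below pointwise, using $(r-d)^2\ge 0$ for $q=2d-1$ and the integer inequality $(r-d)(r-d-1)\ge 0$ for $q=2d$. This is a two-line computation, and it shows that the averaged constraint alone already forces $e/n\le f(q)$.

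You instead keep the constraint edge by edge and combine it with the identity $|V(H)|=\sum_{xy\in E(H)}\bigl(1/d(x)+1/d(y)\bigr)$. This gives the general bound $e_H/n_H\le\max_{xy\in E(H)} d(x)d(y)/(d(x)+d(y))$ and reduces (ii) to a small optimization over integer pairs. That costs a little more work (the balanced-split and monotonicity checks). In return it makes the extremal structure transparent: equality forces every edge of $H$ to join vertices of degrees $\lfloor (q+1)/2\rfloor$ and $\lceil (q+1)/2\rceil$. This is exactly the shape of the graphs $F_q$ in Lemma~\ref{lem:Kneser}: regular for odd $q$, and for $q=2d$ every edge joins a degree-$d$ vertex to a degree-$(d+1)$ vertex.

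One cosmetic point about your reduction. The new ratio is $(e'-d(v))/(n'-1)$, which is at least $(e'-2)/(n'-1)$. The exception ``$e'=2n'$'' is vacuous, since $e'>2n'$ holds throughout.
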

\begin{proof}
Repeatedly delete vertices of current degree at most two. Let the remaining graph be $H$, with $|V(H)| = p$ and $|E(H)| = m$. If the process deleted all vertices, then it would have removed at most $2n$ edges, contrary to $e > 2n$. Thus, $H$ is nonempty. Moreover, $\delta(H)\ge 3$ and $m \ge e - 2(n-p)$. 
Consequently, we have 
\begin{equation}\label{equ2.1}
    \frac{m}{p} \ge \frac{e}{n}, \tag{2.1}
\end{equation}
because $\frac{e - 2(n-p)}{p}\ge \frac{e}{n}$ if and only if $(n-p)(e-2n) \ge 0$. 
Moreover, $\frac{m}{p} = \frac{e}{n}$ holds if and only if $n=p$ (i.e. $G=H$). 
By Lemma \ref{lem:local}, for every edge $xy \in E(H)$, $d_H(x) + d_H(y) - 1 \le q$. 
Summing over all edges of $H$, we obtain
\begin{equation}\label{equ2.2}
    \sum_{v\in V(H)} d_H(v)^2 = \sum_{xy\in E(H)} (d_H(x)+d_H(y)) \le (q+1)m. \tag{2.2}
\end{equation}

(i) The Cauchy--Schwarz inequality gives 
\[
(q+1)m\ge \sum_{v\in V(H)} d_H(v)^2 \ge \frac{1}{p}\left(\sum_{v\in V(H)} d_H(v)\right)^2=\frac{4m^{2}}{p}. 
\]
Thus we have $q\ge \frac{4m}{p}-1\ge \frac{4e}{n}-1$. 
From the discussion above, we see that $q=\frac{4e}{n}-1$ holds if and only if $G=H$ is a $d$-regular graph with $d\ge 3$ and $q=2d-1$. 

(ii) Let $q = 2d-1$. Since $(r-d)^2 \ge 0$ for every real $r$,
\[
\sum_{v\in V(H)} d_H(v)^2 \ge 2d \sum_{v\in V(H)} d_H(v) - d^2 p = 4dm - d^2 p.
\]
Together with (\ref{equ2.2}), this gives $4dm - d^2 p \le 2dm$, and hence $m/p \le d/2=f(2d-1)$.

Now let $q = 2d$. Since $(r-d)(r-d-1) \ge 0$ for every integer $r$, we have $r^2 \ge (2d+1)r - d(d+1)$. 
Applying this to the degrees in $H$ yields
\[
\sum_{v\in V(H)} d_H(v)^2 \ge 2(2d+1)m - d(d+1)p.
\]
Combining with (\ref{equ2.2}) and $q+1 = 2d+1$, we obtain $(2d+1)m \le d(d+1)p$. 
Therefore $m/p \le d(d+1)/(2d+1) = f(2d)$. Together with (\ref{equ2.1}), this completes the proof.
\end{proof}

\subsection{Strong chromatic index for regular graphs}
A graph $G$ is said to \emph{cover} a graph $H$ if there is a surjective graph homomorphism $\phi :V(G)\to V(H)$ that maps the neighbors of each vertex of $G$ bijectively onto the neighbors of its image. The surjective graph homomorphism $\phi$ is called a \emph{covering projection}.
\begin{lemma}
    \label{fiber}
    Let $G$ and $H$ be two graphs such that $G$ covers $H$. Let $\phi: V(G)\to V(H)$ be a covering projection. Then the following properties hold. 
    \begin{itemize}
        \item[(i)] For any two distinct vertices $v,v'\in V(G)$, if $\phi(v)=\phi(v')$, then $N_{G}(v)\cap N_{G}(v')=\emptyset$. 
        \item[(ii)] If $H$ is connected, then $|\phi^{-1}(x)|=|\phi^{-1}(y)|$ for any two vertices $x,y\in V(H)$. Consequently, $\lvert V(H)\rvert \mid \lvert V(G)\rvert$. 
    \end{itemize}
\end{lemma}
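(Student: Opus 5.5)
For part (i), I will argue by contradiction. Suppose $v\neq v'$ satisfy $\phi(v)=\phi(v')=x$ and share a common neighbour $w\in N_G(v)\cap N_G(v')$. Since $\phi$ is a homomorphism, $\phi(v)$ and $\phi(v')$ both lie in $N_H(\phi(w))$. The covering condition at $w$ says that $\phi$ restricted to $N_G(w)$ is a bijection onto $N_H(\phi(w))$, and in particular it is injective. But $v$ and $v'$ are two distinct elements of $N_G(w)$ with the same image $x$, which contradicts this injectivity. (Simplicity of $H$ guarantees that $\phi(w)\neq x$, so nothing degenerate happens here.)

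For part (ii), I will first treat two adjacent vertices $x,y$ of $H$ and count the set of pairs
\[
S=\{(v,w): v\in\phi^{-1}(x),\ w\in\phi^{-1}(y),\ vw\in E(G)\}.
\]
Fix $v\in\phi^{-1}(x)$. Since $\phi$ maps $N_G(v)$ bijectively onto $N_H(x)$ and $y\in N_H(x)$, exactly one neighbour $w$ of $v$ has $\phi(w)=y$. Hence $|S|=|\phi^{-1}(x)|$. Running the same argument from the side of $y$ gives $|S|=|\phi^{-1}(y)|$. So the fibres over adjacent vertices have the same size.

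If $H$ is connected, any two vertices are joined by a path, and applying the adjacent case along each edge of the path shows that all fibres have a common size $t$. Surjectivity of $\phi$ gives $t\ge 1$, although the identity holds regardless. Since the fibres partition $V(G)$, we get $|V(G)|=t\,|V(H)|$, and therefore $|V(H)|$ divides $|V(G)|$.

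None of these steps is a real obstacle. The only point needing care is the double count in (ii): it uses the bijectivity condition at vertices on both sides of the edge $xy$, and for that we need the covering property at every vertex of $G$, not only at some of them.
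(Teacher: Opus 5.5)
Your proof is correct and follows essentially the paper's argument. Part (i) is identical. In part (ii) the paper builds the map $v\mapsto w_v$ from $\phi^{-1}(x)$ to $\phi^{-1}(y)$, uses (i) to see it is injective, and concludes by symmetry; this is the same content as your double count of the edges between the two fibres, and the extension along paths to get $|V(H)|\mid|V(G)|$ matches as well.
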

\begin{proof}
    (i) Suppose not, pick $w\in N_{G}(v)\cap N_{G}(v')$. Since $v,v'\in N_{G}(w)$ and $\phi(v)=\phi(v')$, $\phi$ does not map $N_{G}(w)$ bijectively onto $N_{H}(\phi(w))$, a contradiction. 
    
    (ii) Since $H$ is connected, it is sufficient to show that $|\phi^{-1}(x)|=|\phi^{-1}(y)|$ for any edge $xy\in E(H)$. By definition, for any $v\in \phi^{-1}(x)$, $\phi$ maps $N_{G}(v)$ bijectively onto $N_{H}(x)$. Since $y\in N_{H}(x)$, there is exactly one vertex $w_{v}\in N_{G}(v)$ such that $\phi(w_{v})=y$. By (i), for any two vertices $v,v'\in \phi^{-1}(x)$, $N_{G}(v)\cap N_{G}(v')=\emptyset$ holds. So we have $w_{v}\ne w_{v'}$. Therefore, we obtain $|\phi^{-1}(x)|\le|\phi^{-1}(y)|$. By symmetry, we also have $|\phi^{-1}(y)|\le|\phi^{-1}(x)|$. Hence $|\phi^{-1}(x)|=|\phi^{-1}(y)|$. 
\end{proof}

We also use the following characterization of Lu\v{z}ar, M\'{a}\v{c}ajov\'a, \v{S}koviera, and Sot\'ak~\cite[Theorem~2.2]{LMSS2022}.

\begin{theorem}[{Lu\v{z}ar, M\'{a}\v{c}ajov\'a, \v{S}koviera, and Sot\'ak~\cite[Theorem~2.2]{LMSS2022}}]
\label{thm:Luzar}
Let $k \ge 2$. A finite $k$-regular graph has strong chromatic index $2k-1$ if and only if it covers the Kneser graph $\mathrm{KG}(2k-1,k-1)$.
\end{theorem}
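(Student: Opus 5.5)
The plan is to prove the two implications of Theorem~\ref{thm:Luzar} separately. The backward direction lifts the canonical coloring of Lemma~\ref{odd graph3} along a covering projection. The forward direction builds the covering projection directly from the color sets at the vertices. A lower bound comes first and holds for every $k$-regular graph $G$: for any edge $xy$, the $2k-1$ edges incident with $x$ or $y$ are pairwise at distance at most one. So in any strong edge-coloring they receive distinct colors, and $\chi'_s(G)\ge 2k-1$. Hence it suffices to show that $G$ has a strong $(2k-1)$-edge-coloring if and only if $G$ covers $\mathrm{O}_k$.

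\emph{Covering implies strong index $2k-1$.} Let $\phi:V(G)\to V(\mathrm{O}_k)$ be a covering projection, and color $uv\in E(G)$ by $c(uv)=\sigma(\phi(u)\phi(v))$, with $\sigma$ the strong coloring of Lemma~\ref{odd graph3}. This is well defined because $\phi$ is a homomorphism. We must check that two edges of $G$ that share a vertex, or are joined by an edge, get different colors.

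For adjacent edges $vu,vw$, local bijectivity gives $\phi(u)\ne\phi(w)$. The images are then two distinct edges of $\mathrm{O}_k$ sharing $\phi(v)$, so their $\sigma$-colors differ.

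For edges $uv,wz$ with $vw\in E(G)$, local bijectivity at $v$ and at $w$ gives $\phi(u)\ne\phi(w)$ and $\phi(v)\ne\phi(z)$. Moreover $\phi(v)\ne\phi(w)$, since they are adjacent. It follows that the image edges $\phi(u)\phi(v)$ and $\phi(w)\phi(z)$ are distinct. They are either adjacent (if $\phi(u)=\phi(z)$) or joined by the edge $\phi(v)\phi(w)$. In both cases $\sigma$ separates them, because its color classes are induced matchings. I expect this small case check, that image edges are distinct and within distance one, to be the only delicate point of this direction.

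\emph{Strong index $2k-1$ implies covering.} Fix a strong edge-coloring $c$ of $G$ with colors in $[2k-1]$. Let $S(v)$ be the set of $k$ colors on edges at $v$, and define $\phi(v)=[2k-1]\setminus S(v)$, a $(k-1)$-set.

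First, $\phi$ is a homomorphism. For an edge $uv$, the $2k-1$ edges at $u$ or $v$ carry distinct colors, so $S(u)\cup S(v)=[2k-1]$ and $S(u)\cap S(v)=\{c(uv)\}$. Hence $\phi(u)\cap\phi(v)=\emptyset$, so $\phi(u)\phi(v)$ is an edge of $\mathrm{O}_k$, and its $\sigma$-color is exactly $c(uv)$.

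Next, $\phi$ is locally bijective. For a neighbor $w$ of $v$, the same identity gives $S(w)=([2k-1]\setminus S(v))\cup\{c(vw)\}$, hence $\phi(w)=S(v)\setminus\{c(vw)\}$. The $k$ neighbors of $v$ carry the $k$ distinct colors of $S(v)$. Therefore $\phi$ maps $N(v)$ bijectively onto the $k$ sets $S(v)\setminus\{i\}$ with $i\in S(v)$. These are precisely the $(k-1)$-subsets of $[2k-1]$ disjoint from $\phi(v)$, that is, $N_{\mathrm{O}_k}(\phi(v))$.

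Finally, $\phi$ is surjective. The image of $\phi$ is nonempty and, by local bijectivity, closed under taking neighbors in $\mathrm{O}_k$. Since $\mathrm{O}_k$ is connected by Proposition~\ref{prop:Kneser}, the image is all of $V(\mathrm{O}_k)$. So $\phi$ is a covering projection, which completes the equivalence.

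The main idea of this direction is choosing $\phi(v)$ as the complement of the color set at $v$; after that choice every step reduces to the observation that the closed edge-neighborhood of any edge uses all $2k-1$ colors exactly once.
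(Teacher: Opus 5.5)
The paper gives no proof of this statement. It is quoted from \cite[Theorem~2.2]{LMSS2022} and used as a black box, so there is no internal argument to compare yours against.

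Your proposal is a correct and complete self-contained proof. The lower bound $\chi'_s(G)\ge 2k-1$ is right. In the backward direction you check the lifted coloring $\sigma\circ\phi$ in exactly the two configurations that matter, and you correctly get distinctness of the image edges from $\phi(v)\notin\{\phi(w),\phi(z)\}$. In the forward direction, the identities $S(u)\cup S(v)=[2k-1]$ and $S(u)\cap S(v)=\{c(uv)\}$ give you three things: the homomorphism property, local bijectivity onto the $k$ sets $S(v)\setminus\{i\}$, and (with connectivity of $\mathrm{O}_k$) surjectivity.

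The only outside input is that $\sigma$ is a strong coloring. That is Lemma~\ref{odd graph3}, itself quoted from the same source, and you can remove the dependence with a two-line check:
\begin{itemize}
\item $\sigma(AB)=\sigma(AC)$ forces $B=C$;
\item if $\sigma(AB)=\sigma(CD)=x$ with $B\cap C=\emptyset$, then $C\subseteq[2k-1]\setminus(B\cup\{x\})=A$, so $C=A$ and the two edges are not disjoint.
\end{itemize}

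Your route also clarifies what the paper actually needs. Lemma~\ref{lem:Kneser} uses the theorem only to get $\chi'_s(\mathrm{O}_d)=2d-1$, which is just your lower bound plus $\sigma$. Theorem~\ref{thm:fixed}(v) uses only the forward implication, which is exactly your complement-of-color-set map. So your argument makes the paper independent of the external characterization.

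Your forward direction also gives $c=\sigma\circ\phi$, so every strong $(2k-1)$-edge-coloring of a $k$-regular graph is a lift of $\sigma$. Taking $G=\mathrm{O}_k$ and applying Lemma~\ref{fiber}(ii), $\phi$ is a bijective covering and hence an automorphism. This recovers the uniqueness assertion in Lemma~\ref{odd graph3}.
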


\subsection{Kneser examples}

In this subsection, we focus on the upper bound. In order to show $\chi_{\mathrm{S}}(n,e,P_4) \le q$, by Lemma~\ref{lem:cover}, it is sufficient to construct an $n$-vertex graph with $e$ edges and strong chromatic index $q$. The following lemma enables the construction of sufficiently large graphs with prescribed density and strong chromatic index at most $q$. 

\begin{lemma}
\label{lem:copy}
Let $F$ be a fixed graph whose strong chromatic index is $q$, and suppose $\lambda = {|E(F)|}/{|V(F)|}$. 
Let $\{e_n\}$ be any sequence of nonnegative integers with $e_n/n \to u$ as $n\to +\infty$. Then, there exists $n_0>0$ such that, if one of the following conditions holds: 
\begin{itemize}
    \item $u<\lambda$ and $n>n_0$, 
    \item $u=\lambda$, $e_{n}=un$ and $\lvert V(F)\rvert\mid n$, 
\end{itemize}
then there is an $n$-vertex graph with exactly $e_n$ edges whose strong chromatic index is at most $q$. Consequently, we have $\chi_{\mathrm{S}}(n,e_n,P_4) \le q$. 
\end{lemma}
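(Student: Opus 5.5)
The plan is to build the graph as a disjoint union of copies of $F$, delete edges to reach exactly $e_n$, and pad with isolated vertices. All three operations keep the strong chromatic index at most $q$. Taking disjoint copies does so because a strong edge-coloring of $F$, applied to each copy with the same palette, is still strong: no edge joins two copies, so each colour class stays an induced matching. Deleting edges does so because a subset of an induced matching in $G$ is an induced matching in any spanning subgraph of $G$. Adding isolated vertices clearly changes nothing. Write $v_F=|V(F)|$ and $e_F=|E(F)|=\lambda v_F$.

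\emph{Case $u<\lambda$.} Let $t=\lfloor n/v_F\rfloor$ and take $t$ disjoint copies of $F$ together with $n-tv_F<v_F$ isolated vertices. This graph has exactly $n$ vertices and $t e_F\ge (n/v_F-1)e_F=\lambda n-e_F$ edges. Since $e_n/n\to u<\lambda$, there is $n_0$ such that for $n>n_0$,
\[
e_n\le \tfrac{u+\lambda}{2}\,n\le \lambda n-e_F .
\]
So the union has at least $e_n$ edges, and deleting arbitrary edges leaves exactly $e_n$. Here $e_n\ge0$ because the sequence is nonnegative.

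\emph{Case $u=\lambda$, $e_n=un$, $v_F\mid n$.} Take exactly $n/v_F$ disjoint copies of $F$. This gives $n$ vertices and $(n/v_F)e_F=\lambda n=e_n$ edges, and no deletion is needed.

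In both cases we get an $n$-vertex graph $G$ with exactly $e_n$ edges and $\chi'_s(G)\le q$. Lemma~\ref{lem:cover} then gives $\chi_{\mathrm S}(G,P_4)\le q$. Since $\chi_{\mathrm S}(n,e_n,P_4)$ is defined as a minimum over graphs with at least $e_n$ edges, this yields $\chi_{\mathrm S}(n,e_n,P_4)\le q$.

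I expect no real obstacle. The only points that need care are choosing $n_0$ so that the slack $(\lambda-u)n$ absorbs the loss of at most one copy's worth of edges, $e_F$, and checking that the divisibility and exact-count hypotheses make the equality case work without any deletion.
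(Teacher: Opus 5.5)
Your proposal is correct and follows essentially the same route as the paper. You take $\lfloor n/|V(F)|\rfloor$ disjoint copies of $F$ padded with isolated vertices, and when $u<\lambda$ you use the slack $(\lambda-u)n$ to absorb the loss of at most one copy's worth of edges before deleting down to exactly $e_n$; in the divisible case $u=\lambda$ the count is exact. You also justify explicitly that disjoint unions and edge deletion keep $\chi'_s\le q$, which the paper asserts without comment.
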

\begin{proof}
Let $v = |V(F)|$ and $e= |E(F)| = \lambda v$. Let the graph $G_{n}$ be constructed by taking $t = \lfloor n/v\rfloor$ vertex-disjoint copies of $F$ and adding $n - tv$ isolated vertices. It is easy to see that $G_{n}$ has $te$ edges and strong chromatic index $q$. 

If $u=\lambda$, $e_{n}=un$ and $v\mid n$ for some $n$, then we have $e_{n}=un=\lambda n$ and $|E(G_{n})|=te=\lambda n$. Hence $G_{n}$ is the desired graph. 

If $u<\lambda$, since $t v > n - v$, we have $te= \lambda t v > \lambda (n - v)$. 
As $e_n/n \to u < \lambda$ and $\lambda\frac{n-v}{n} \to \lambda$, there exists $n_0$ such that $\lambda (n - v)>e_n$ holds for all $n>n_0$. Now we choose $n>n_0$ and construct $G'_{n}$ by deleting arbitrary $te-e_{n}$ edges from $G_{n}$. Since $G'_{n}$ is a subgraph of $G_{n}$, the strong chromatic index of $G'_{n}$ is at most $q$, and $e(G'_n)=te-(te-e_n)=e_n$, as desired. 
\end{proof}

We will use Lemma~\ref{lem:copy} to obtain the upper bound in Theorem~\ref{thm:coarse} and Theorem~\ref{thm:fixed}. Therefore, it suffices to construct a graph with strong chromatic index $q$ and density as large as possible.
Note that the bound in Lemma~\ref{lem:density} (ii) is also sharp. We will construct a graph $F_q$ that attains the bound in Lemma~\ref{lem:density} (ii) and apply Lemma~\ref{lem:copy} to it. 

\begin{lemma}
\label{lem:Kneser}
For every $q \ge 1$, there is a finite graph $F_q$ whose strong chromatic index is $q$ and such that ${|E(F_q)|}/{|V(F_q)|} =f(q)$. 
\end{lemma}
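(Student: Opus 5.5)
We will construct $F_q$ explicitly from odd graphs, splitting into the cases $q=2d-1$ and $q=2d$, following the remark after the definition of $f$.

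\emph{Odd case $q=2d-1$.} For $d\ge 2$ take $F_q=\mathrm{O}_d=\mathrm{KG}(2d-1,d-1)$. By Proposition~\ref{prop:Kneser} it is $d$-regular, so $|E|/|V|=d/2=f(q)$. Lemma~\ref{odd graph3} gives a strong $(2d-1)$-edge-coloring, so $\chi'_s(F_q)\le 2d-1$. For the lower bound, any edge $xy$ together with its neighbouring edges forms $2d-1$ edges that are pairwise adjacent or joined by $xy$, so they need distinct colors; hence $\chi'_s=2d-1$. For $d=1$ ($q=1$, $f=1/2$) take $F_1=K_2$.

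\emph{Even case $q=2d$.} Take $\mathrm{O}_{d+1}=\mathrm{KG}(2d+1,d)$ with the canonical coloring $\sigma$ from Lemma~\ref{odd graph3}, which uses $2d+1$ colors. Delete one color class, say all edges $AB$ with $\sigma(AB)=\{2d+1\}$, i.e.\ with $A\cup B=[2d]$. Call the result $F_{2d}$. The restriction of $\sigma$ is a strong edge-coloring with $2d$ colors, since subgraphs inherit induced matchings: an edge joining two same-colored edges in $F_{2d}$ would already lie in $\mathrm{O}_{d+1}$. Hence $\chi'_s(F_{2d})\le 2d$.

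For the density, let $N=\binom{2d+1}{d}$. Then $\mathrm{O}_{d+1}$ has $N(d+1)/2$ edges. Each color class has the same size by symmetry of $[2d+1]$, namely $N(d+1)/(2(2d+1))$. Equivalently, the removed class is the perfect matching on the $\binom{2d}{d}$ vertices that are $d$-subsets of $[2d]$, so it has $\binom{2d}{d}/2$ edges. Thus
\[
\frac{|E(F_{2d})|}{|V(F_{2d})|}=\frac{d+1}{2}\cdot\frac{2d}{2d+1}=\frac{d(d+1)}{2d+1}=f(2d).
\]

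It remains to show $\chi'_s(F_{2d})\ge 2d$, and this is the step needing care. Choose an edge $xy$ with both endpoints of degree $d+1$. Such an edge exists: a degree-$(d+1)$ vertex is a $d$-set containing $2d+1$, and when $d\ge 2$ two disjoint such sets cannot both contain $2d+1$. So instead use an edge $xy$ with $x\ni 2d+1$ and $y\subset[2d]$. Then $\deg x=d+1$ and $\deg y=d$, and the edges at $x$ or $y$ number $(d+1)+d-1=2d$. They are pairwise adjacent or joined by $xy$, so they need $2d$ distinct colors. This gives $\chi'_s=2d$.

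For $d=1$ ($q=2$, $f=2/3$): $\mathrm{O}_2=K_3$ minus one edge is $P_3$, which has strong chromatic index $2$ and density $2/3$, consistent with the general construction.
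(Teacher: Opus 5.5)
Your proof is correct and is essentially the paper's: you use the same graphs $K_2$, $\mathrm{O}_d$, and $\mathrm{O}_{d+1}$ minus one canonical color class, and in the even case you get the lower bound from an edge joining a degree-$(d+1)$ vertex (containing the deleted color) to a degree-$d$ vertex. The only deviation is in the odd case, where you obtain $\chi'_s(\mathrm{O}_d)=2d-1$ directly from the canonical coloring plus the local count of $2d-1$ pairwise conflicting edges, rather than invoking the Lu\v{z}ar--M\'a\v{c}ajov\'a--\v{S}koviera--Sot\'ak characterization; this is more elementary and self-contained. One expository slip: your sentence ``Such an edge exists'' should say that no such edge exists, as your own next clause shows (and this holds for every $d\ge 1$, not only $d\ge 2$); after that, your switch to an edge $xy$ with $2d+1\in x$ and $y\subset[2d]$ is exactly the paper's argument.
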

\begin{proof}
For $q=1$, take $F_1 = K_2$. Its single edge requires exactly one color, and $|E(F_1)|/|V(F_1)| = 1/2 = f(1)$.

Next take $q = 2d-1$, where $d \ge 2$. We have $f(q)=d/2$. Let $F_q = \mathrm{O}_d = \mathrm{KG}(2d-1,d-1)$. 
By Proposition~\ref{prop:Kneser} and Theorem~\ref{thm:Luzar}, we see that $F_q$ is the desired graph. 

Finally, take $q = 2d$, where $d \ge 1$. We start with $\mathrm{O}_{d+1} = \mathrm{KG}(2d+1,d)$, which is $(d+1)$-regular and has strong chromatic index $2d+1$. 
As in Lemma~\ref{odd graph3}, we give a canonical strong edge-coloring of $\mathrm{O}_{d+1}$ with $2d+1$ colors. By symmetry, all color classes have equal size. Delete one color class and let the remaining graph be $F_q$. Then $F_q$ has strong chromatic index at most $2d$ and has edge-density
\[
\frac{|E(F_q)|}{|V(F_q)|}=\frac{d+1}{2}\left(1 - \frac{1}{2d+1}\right) = \frac{d(d+1)}{2d+1} = f(2d).
\]

It remains to show that $\chi^\prime_s(F_q)\ge 2d$. Let $x$ be the deleted color. 
A vertex $A$ with $x\in A$ loses no incident edge, and hence has degree $d+1$ in $F_q$. 
A vertex $B$ with $x\notin B$ loses exactly one incident edge, and hence has degree $d$ in $F_q$.

Choose an edge $AB$ of $F_q$ with $x\in A$ and $x\notin B$. Such an edge exists: take any
$d$-set $A$ containing $x$ and any $d$-set $B$ disjoint from $A$; then the color of $AB$ is not $x$,
so this edge remains in $F_q$.

In any strong edge-coloring, all edges incident with $A$ or $B$ must receive pairwise distinct colors.
Therefore
\[
\chi'_s(F_q)\ge d_{F_q}(A)+d_{F_q}(B)-1=(d+1)+d-1=2d.
\]
\end{proof}

\begin{corollary}
\label{cor:upper}
Let $q \ge 1$, and define 
\[ N_q= 
\begin{cases} 
    2, & q=1,\\[2mm] 
    \binom{2p+1}{p}, & q\ge 2,\ \text{where } p=\left\lfloor \frac q2\right\rfloor. 
\end{cases} \]
Let $u$ be fixed, and let $(e_n)$ be any sequence of nonnegative integers with $e_n/n \to u$. There exists $n_0=n_0(q, (e_n))>0$ such that, if one of the following conditions holds: 
\begin{itemize}
    \item $u<f(q)$ and $n>n_0$, 
    \item $u=f(q)$, $e_n=un$ and $N_q\mid n$, 
\end{itemize}
then $\chi_{\mathrm{S}}(n,e_n,P_4) \le q$. 
\end{corollary}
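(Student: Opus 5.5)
The plan is to apply Lemma~\ref{lem:copy} directly to the graph $F=F_q$ from Lemma~\ref{lem:Kneser}. That lemma provides a finite graph $F_q$ with $\chi'_s(F_q)=q$ and $|E(F_q)|/|V(F_q)|=f(q)$. Taking $\lambda=f(q)$ in Lemma~\ref{lem:copy}, the first alternative ($u<f(q)$ and $n>n_0$) is exactly the first condition of Lemma~\ref{lem:copy}. That lemma then gives an $n$-vertex graph with exactly $e_n$ edges and strong chromatic index at most $q$, and hence, via Lemma~\ref{lem:cover}, $\chi_{\mathrm{S}}(n,e_n,P_4)\le q$. The threshold $n_0$ depends only on $F_q$, which is determined by $q$, and on the sequence $(e_n)$, matching the stated dependence $n_0(q,(e_n))$.

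For the second alternative ($u=f(q)$, $e_n=un$), Lemma~\ref{lem:copy} requires $|V(F_q)|\mid n$. So the only real work is to check that $|V(F_q)|=N_q$ in every case.
\begin{itemize}
\item For $q=1$: $F_1=K_2$ has $2=N_1$ vertices.
\item For $q=2d-1$ with $d\ge2$: $F_q=\mathrm{O}_d=\mathrm{KG}(2d-1,d-1)$ has $\binom{2d-1}{d-1}$ vertices. Here $p=\lfloor q/2\rfloor=d-1$, so $\binom{2p+1}{p}=\binom{2d-1}{d-1}$, as required.
\item For $q=2d$ with $d\ge1$: $F_q$ is obtained from $\mathrm{O}_{d+1}=\mathrm{KG}(2d+1,d)$ by deleting the edges of one color class. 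Deleting edges keeps all vertices, so $|V(F_q)|=\binom{2d+1}{d}$. Since $p=d$, this equals $N_q$.
\end{itemize}
One should also note that for $q=2$ ($d=1$), $\mathrm{KG}(3,1)=K_3$, and deleting one color class leaves a path on $3$ vertices. This has density $2/3=f(2)$ and strong chromatic index $2$, consistent with Lemma~\ref{lem:Kneser}.

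There is no real obstacle here. The only point needing care is the vertex-count bookkeeping in the even case: one must confirm that $F_q$ is taken to be the spanning subgraph after the edge deletion, rather than discarding vertices that become isolated. This is also what makes the density computation in Lemma~\ref{lem:Kneser} use $|V(\mathrm{O}_{d+1})|$ as the denominator. With that checked, both alternatives follow immediately from Lemma~\ref{lem:copy}.
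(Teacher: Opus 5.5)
Your proposal is correct and matches the paper's proof, which is simply to apply Lemma~\ref{lem:copy} to the graph $F_q$ from Lemma~\ref{lem:Kneser}. Your check that $|V(F_q)|=N_q$ in each case makes explicit the bookkeeping the paper leaves implicit. Your worry about vertices becoming isolated does not actually arise, since every vertex of $F_{2d}$ has degree at least $d\ge 1$.
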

\begin{proof}
Apply Lemma \ref{lem:copy} to the graph $F_q$ from Lemma \ref{lem:Kneser}.
\end{proof}

\section{Proofs }\label{sec3}

\begin{proof}[\underline{Proof of Theorem \ref{thm:coarse}}]
    For the upper bound, we compute $f(4\lfloor u \rfloor+3)=\lfloor u \rfloor+1>u$. By Corollary~\ref{cor:upper}, applied with $e_n = \lfloor u n\rfloor$, we have $\chi_{\mathrm{S}}(n,\lfloor u n\rfloor ,P_4)\le 4\lfloor u \rfloor+3$ for all $n>n_0$, where $n_0=n_0(u)$ is a constant. 

    For the lower bound, let $G$ be an $n$-vertex graph with $e$ edges admitting a $P_4$-rainbow coloring with $q$ colors. If $e > 2n$, by Lemma~\ref{lem:density} (i), we have $q\ge \frac{4e}{n}-1$. 
    If $0 < e \le 2n$, then $\frac{4e}{n} - 7 \le 1$, and the trivial bound $q \ge 1$ gives $q \ge \frac{4e}{n} - 7$. 
    Consequently $\chi_{\mathrm{S}}(n,e,P_4) \ge \frac{4e}{n} - 7$ for all $e>0$, and $\chi_{\mathrm{S}}(n,e,P_4) \ge \frac{4e}{n} - 1$ when $e>2n$. Substituting $e = \lfloor u n\rfloor$ proves the two lower bounds.
\end{proof}

\begin{proof}[\underline{Proof of Theorem \ref{thm:fixed}}]
    Let $u$ be fixed. If $0<u<1$, then $\lfloor u n\rfloor \le n-1$ for all $n>\frac{1}{1-u}$. A star with $\lfloor u n\rfloor$ edges, together with isolated vertices, contains no $P_4$, so $\chi_{\mathrm{S}}(n,\lfloor u n\rfloor ,P_4)=1$, which shows (i). 

Let $u=1$. The Erd\H os--Gallai theorem~\cite{EG1959} gives
    \[
    \mathrm{ex}(n,P_4)=
    \begin{cases}
    n, & 3\mid n,\\
    n-1, & 3\nmid n.
    \end{cases}
    \]

    If $3\mid n$, the disjoint union of $n/3$ triangles has $n$ vertices, $n$ edges, and contains no copy of $P_4$. Therefore $\chi_{\mathrm S}(n,n,P_4)=1$. 
    Suppose that $3\nmid n$. Since
    $\mathrm{ex}(n,P_4)=n-1$, every $n$-vertex graph with $n$ edges contains a copy of $P_4$. Thus
    $\chi_{\mathrm S}(n,n,P_4)\ge3$.

    For the upper bound, let $H_4$ be obtained from a triangle $abc$ by adding a pendant edge $ad$. Color $ad$ with color $1$, color $ab$ and $ac$ with color $2$, and color $bc$ with color $3$. The two copies $dabc$ and $dacb$ of $P_4$ are rainbow.
    Let $H_5$ be obtained from $H_4$ by adding a vertex $e$ adjacent to $d$, and color $de$ with color $3$. up to reversal,  the copies of $P_4$ in $H_5$ are $dabc, dacb, edab, edac,$ and each is rainbow.

    If $n=3t+1$, take the disjoint union of $H_4$ and $t-1$ triangles.
    If $n=3t+2$, take the disjoint union of $H_5$ and $t-1$ triangles.
    In either case, the resulting graph has $n$ vertices and $n$ edges and admits a $3$-coloring in which every copy of $P_4$ is rainbow. Hence $\chi_{\mathrm S}(n,n,P_4)\le3$, proving (ii).

    Suppose that $1<u\le2$. For all sufficiently large $n$, $\lfloor un\rfloor>n\ge\mathrm{ex}(n,P_4)$. Hence every $n$-vertex graph with $\lfloor un\rfloor$ edges contains a copy of $P_4$, and therefore $\chi_{\mathrm S}(n,\lfloor un\rfloor,P_4)\ge3$.
    By the definition of $q_+(u)$, we have $f(q_+(u))>u$.
    Corollary~\ref{cor:upper}, applied with $e_n=\lfloor un\rfloor$, gives $\chi_{\mathrm S}(n,\lfloor un\rfloor,P_4)\le q_+(u)$ for all sufficiently large $n$. This proves (iii).

    It remains to consider $u>2$. Since $f(q_+(u))>u$, Corollary~\ref{cor:upper} gives
$\chi_{\mathrm S}(n,\lfloor un\rfloor,P_4)\le q_+(u)$ for all sufficiently large $n$.

For the lower bound, put $q=q_0(u)-1$. Since $f$ is strictly increasing, the definition of $q_0(u)$ gives $f(q)<u$. For all sufficiently large $n$, we have
$\lfloor un\rfloor>2n$ and $\lfloor un\rfloor/n>f(q)$. Lemma~\ref{lem:density} therefore excludes a coloring with at most $q$ colors in which every copy of $P_4$ is rainbow. Thus $\chi_{\mathrm S}(n,\lfloor un\rfloor,P_4)\ge q+1=q_0(u)$.

If $2u \notin \mathbb{Z}$, then $u \neq f(q)$ for any odd integer $q$, since
$f(q)=d/2$ implies $2u=d \in \mathbb{Z}$.
For even $q=2d$, we have $\frac{d}{2} < f(2d) < \frac{d+1}{2}$. Hence, if $u = f(2d)$ for some $d$, then $d = \lfloor 2u \rfloor$, so $q = 2\lfloor 2u \rfloor$. Therefore, if $2u \notin \mathbb{Z}$ and $u \neq f(2\lfloor 2u\rfloor)$, then
$u \neq f(q)$ for any integer $q$. Hence $q_0(u)=q_+(u)$. Thus $\chi_{\mathrm S}(n,\lfloor un\rfloor,P_4)=q_+(u)$ for all sufficiently large $n$. This proves (iv).

Let $u>2$ be an integer. Since $f(4u-1)=u<f(4u)$, part (iv) gives
\begin{equation}
\label{eq:integer-critical}
4u-1\le\chi_{\mathrm S}(n,un,P_4)\le4u
\end{equation}
for all sufficiently large $n$.

Suppose first that $N_u\mid n$. Applying Corollary~\ref{cor:upper} with $q=4u-1$ gives $\chi_{\mathrm S}(n,un,P_4)\le4u-1$. Together with \eqref{eq:integer-critical}, this yields $\chi_{\mathrm S}(n,un,P_4)=4u-1$.

Now suppose that $N_u\nmid n$. Assume, for a contradiction, that there is an $n$-vertex graph $G$ with $un$ edges such that $\chi_{\mathrm S}(G,P_4)=4u-1$. Since $4u-1=4|E(G)|/|V(G)|-1$, equality holds in Lemma~\ref{lem:density}(i). Hence $G$ is $2u$-regular.

Since $\delta(G)=2u\ge3$, Corollary~\ref{cor:strong-equals-rainbow} gives $\chi'_s(G)=4u-1$. By Theorem~\ref{thm:Luzar}, the graph $G$
covers $\mathrm O_{2u}=\mathrm{KG}(4u-1,2u-1)$. By Proposition~\ref{prop:Kneser}, this graph is connected and has $N_u=\binom{4u-1}{2u-1}$ vertices. Lemma~\ref{fiber} therefore implies that $N_u\mid n$, a contradiction.

It follows from \eqref{eq:integer-critical} that $\chi_{\mathrm S}(n,un,P_4)=4u$ whenever $N_u\nmid n$. This proves (v) and completes the proof. 
\end{proof}

We use the following theorem of Alon, Moitra, and Sudakov~\cite[Theorem~2.20]{AMS2013}.

\begin{theorem}[{Alon, Moitra, and Sudakov~\cite[Theorem 2.20]{AMS2013}}]
\label{thm:Alon}
There is an absolute constant $c>0$ such that, for every $\eta>0$ and for arbitrarily large $N$, there is an $N$-vertex graph missing at most $N^{3/2+\eta}$ edges whose edge set is covered by at most $N^{2-c\eta^3}$ induced matchings.
\end{theorem}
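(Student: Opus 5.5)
The plan is to reconstruct the Alon--Moitra--Sudakov geometric construction. Fix integers $C$ and $d$, to be chosen as functions of $\eta$, and take the vertex set $V=[C]^d$, so $N=C^d$. The graph $G$ will be defined by a distance rule. Let $\mu$ be the typical value of $\|x-y\|^2$ for random $x,y\in V$, which is about $d\,C^2/6$. Make $x,y$ \emph{non}-adjacent exactly when $\|x-y\|^2$ lies in a narrow window $[\mu-w,\mu+w]$ around $\mu$, and adjacent otherwise.

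The argument has three steps.

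\textbf{Step 1 (few missing edges).} Compute the number of pairs with $\|x-y\|^2\in[\mu-w,\mu+w]$. The quantity $\|x-y\|^2=\sum_i (x_i-y_i)^2$ is a sum of $d$ independent bounded terms with variance $\Theta(dC^4)$. A local limit bound then gives that the number of missing pairs is
\[
N^2\cdot O\bigl(w/(C^2\sqrt d)\bigr).
\]
We choose $w$ small enough, but not too small, relative to $C^2\sqrt d$, so that this count is at most $N^{3/2+\eta}$. To make this possible, the parameters are arranged so that the window probability is about $N^{-1/2+\eta}$; this forces $C$ to be large compared with $d$, or the window to use a finer arithmetic condition.

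\textbf{Step 2 (induced matchings).} For each nonzero vector $z$ and each ``slab'' parameter $s$, let $M_{z,s}$ be the set of pairs $\{x,x+z\}$ with $x,x+z\in V$ and $\langle x,z\rangle\in[s,s+w')$ for a suitable slab width $w'$.

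\emph{Each $M_{z,s}$ is an induced matching.} Take two pairs $\{x,x+z\}$ and $\{y,y+z\}$ in $M_{z,s}$. The two cross distances are $\|x-y\pm z\|^2=\|x-y\|^2\pm2\langle x-y,z\rangle+\|z\|^2$. The slab condition makes $\langle x-y,z\rangle$ small. So the cross pairs fall in the forbidden window, i.e.\ are non-edges, exactly when $\|x-y\|^2+\|z\|^2$ is near $\mu$.

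\emph{Restricting to edges of $G$.} We therefore keep in $M_{z,s}$ only edges of $G$, that is, vectors $z$ with $\|z\|^2\notin[\mu-w,\mu+w]$. This requires restricting to pairs for which $\|x-y\|^2$ itself is controlled. Following AMS, I would do this by additionally splitting $V$ into classes according to $\|x\|^2$ modulo a coarse scale, and using the identity $\|x-y\|^2=\|x\|^2+\|y\|^2-2\langle x,y\rangle$ so that all relevant quantities are pinned.

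\emph{The cover.} Every edge $\{x,y\}$ of $G$ lies in some $M_{y-x,s}$. The number of matchings used is
\[
\#\{z\}\cdot\#\{s\}\cdot\#\{\text{norm classes}\}\approx N\cdot (\text{range of }\langle x,z\rangle)/w'\cdot(\cdots).
\]

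\textbf{Step 3 (parameters).} The total count must come out to $N^{2-c\eta^3}$, i.e.\ a saving of $N^{c\eta^3}$ over the trivial $N\cdot N$. The saving comes from the slab width $w'$ being polynomially larger than $1$ while still keeping the cross terms $2\langle x-y,z\rangle$ inside the window. To balance this against Step 1, I would take $d\approx\eta^{-2}$ (or $\log$-scale), $C=N^{1/d}$, and widths $w,w'$ that are powers of $C$. The cubic exponent $\eta^3$ should come from the three competing losses: the concentration scale $\sqrt d$, the window width, and the number of norm classes.

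I expect the main obstacle to be Step 2. The issue is making \emph{both} cross pairs non-edges simultaneously for \emph{all} pairs of edges in a matching, while still using only about $N^{2-c\eta^3}$ matchings. This requires simultaneously pinning $\|x-y\|^2$, $\langle x-y,z\rangle$ and $\|z\|^2$ to windows much narrower than their natural fluctuations. I would first try the norm-class partition described above, and if that loses too much, switch to a random-shift or Behrend-type restriction of the coordinates as in AMS.
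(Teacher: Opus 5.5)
The paper gives no proof of this statement. It is quoted from Alon, Moitra and Sudakov \cite[Theorem~2.20]{AMS2013} and used only as a black box in the proof of Theorem~\ref{thm:near}. Your reconstruction breaks down at Step~2, and the failure is structural rather than technical. When you check that $M_{z,s}$ is induced, you only examine the pairs $\{x,y+z\}$ and $\{x+z,y\}$. But $\{x,y\}$ and $\{x+z,y+z\}$ must also be non-edges, i.e.\ $\|x-y\|^2\ge\mu-w$. Combine this with $\|x-y\pm z\|^2\le\mu+w$ and the parallelogram law $\|x-y+z\|^2+\|x-y-z\|^2=2\|x-y\|^2+2\|z\|^2$; this gives $\|z\|^2\le 2w$. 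So any $M_{z,s}$ with at least two edges uses only edges of squared length at most $2w\ll\mu$, which is a negligible part of $E(G)$. A typical edge, of squared length of order $\mu$, can only lie in a singleton matching, so you would need about $N^2/2$ matchings. Your requirement that $\|x-y\|^2+\|z\|^2$ be near $\mu$ is directly incompatible with $\|x-y\|^2$ itself being near $\mu$.

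No refinement of the matchings (norm classes, slabs, random shifts, Behrend-type subsets) can repair this, because the obstruction lies in the graph itself. Let $a_1b_1,\dots,a_mb_m$ be any induced matching of your $G$. Then the points $a_1,\dots,a_m$ have all pairwise squared distances in $[\mu-w,\mu+w]$. Their Gram matrix $\Gamma$ about the centroid has rank at most $d$, diagonal entries $\frac{\mu}{2}(1-\frac1m)+O(w)$, and off-diagonal entries $-\frac{\mu}{2m}+O(w)$. Hence $(\operatorname{tr}\Gamma)^2\le d\,\|\Gamma\|_F^2$ forces $m=O(d)$ once $w\le c\,\mu/\sqrt d$ for a small absolute constant $c$. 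That condition holds in your setting: $\mu/\sqrt d$ is of the order of the standard deviation $\Theta(C^2\sqrt d)$ of $\|x-y\|^2$, and Step~1 needs a window far narrower than that. So every induced matching of $G$ has $O(\log N)$ edges, and $\Omega(N^2/\log N)$ of them are needed.

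Steps~1 and~3 are also mutually inconsistent. Squared distances are integers with standard deviation $\Theta(C^2\sqrt d)$, so any window around $\mu$ has probability at least roughly $N^{-2/d}/\sqrt d$. This is at most $N^{-1/2+\eta}$ only if $d\le 4/(1-2\eta)$, whereas you take $d\approx\eta^{-2}$, which leaves about $N^{2-2\eta^2}$ missing edges. A proof of the statement needs a graph whose non-edges are not a narrow window of a single Euclidean distance. Short of reproducing such a construction, the statement should be cited, as the paper does.
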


\begin{proof}[\underline{Proof of Theorem \ref{thm:near}}]
Fix $0<\epsilon<1/2$, and choose $0<\eta < 1/2 - \epsilon$. By Theorem \ref{thm:Alon}, for arbitrarily large $n$ there exists an $n$-vertex graph $G_n$ missing at most $n^{3/2+\eta}$ edges and covered by at most $n^{2-c\eta^3}$ induced matchings. Since $3/2+\eta < 2-\epsilon$, the graph $G_n$ has more edges than required for all sufficiently large such $n$. We delete arbitrary edges until exactly $\lfloor n^{2-\epsilon}\rfloor$ edges are missing, and call the resulting graph $H_n$.

Edge deletion preserves the property of being covered by induced matchings. Lemma \ref{lem:cover} gives $\chi_{\mathrm{S}}(H_n,P_4) \le n^{2-c\eta^3}$. 
Since $|E(H_n)| = \binom{n}{2} - \lfloor n^{2-\epsilon}\rfloor$, 
we obtain $\chi_{\mathrm{S}}\left(n,\binom{n}{2} - \lfloor n^{2-\epsilon}\rfloor ,P_4\right) \le n^{2-c\eta^3}$. 
The theorem follows with $\gamma = c\eta^3$. For instance, taking $\eta = (1/2 - \epsilon)/2$ gives $\gamma = \Omega((1/2-\epsilon)^3)$.
\end{proof}

\section{One concluding remark}
Recall our answer to Problem \ref{problem-2}: for any $0<\epsilon<1/2$, there are $n$-vertex graphs with $\binom{n}{2}-\lfloor n^{2-\epsilon}\rfloor$ edges whose $P_4$-rainbow colorings use only $o(n^2)$ colors. Hence the quadratic lower bound proposed by Burr, Erd\H os, Graham, and S\'os~\cite{BEGS1989} does not extend to this range. The construction is that of Alon, Moitra, and Sudakov~\cite{AMS2013}. Whether a quadratic bound reappears for $\epsilon\ge 1/2$ is not known.


\begin{thebibliography}{99}

\bibitem{AMS2012} N. Alon, A. Moitra, and B. Sudakov, Nearly complete graphs decomposable into large induced matchings and their applications, \emph{Proceedings of the 44th Symposium on Theory of Computing Conference, STOC 2012, New York, NY, USA, May 19--22, 2012} (2012), 1079--1089.

\bibitem{AMS2013} N. Alon, A. Moitra, and B. Sudakov, Nearly complete graphs decomposable into large induced matchings and their applications, \emph{J. Eur. Math. Soc.} (2013), no. 15, 1075--1096.

\bibitem{BEFGS1991} S. A. Burr, P. Erd\H os, P. Frankl, R. L. Graham, and V. T. S\'os, Further results on maximal anti-ramsey graphs, in \emph{Graph Theory, Combinatorics, and Applications}, Vol. I, Y. Alavi, A. Schwenk (Editors), John Wiley and Sons, New York (1988), 193--206. 

\bibitem{BEGS1989} S. A. Burr, P. Erd\H os, R. L. Graham, and V. T. S\'os, Maximal antiramsey graphs and the strong chromatic number, \emph{J. Graph Theory} \textbf{13} (1989), no. 3, 263--282.

\bibitem{BCM2026} M. Buci\'c, K. Chen, and J. Ma, On a maximal anti-Ramsey conjecture of Burr, Erd\H os, Graham, and S\'os, arXiv:2603.18952, 2026.

\bibitem{E1962} P. Erd\H{o}s, On a theorem of Rademacher-Tur\'{a}n, \emph{Illinois J. Math} \textbf{6} (1962), 122--127. 

\bibitem{E1991} P. Erd\H{o}s, Problems and results in combinatorial analysis and combinatorial number theory, \emph{Graph theory, combinatorics, and applications} \textbf{1} (Kalamazoo, MI, 1988) (1991), 397--406. 


\bibitem{ESS1975} P. Erd\H os, M. Simonovits, and V. T. S\'os, Anti-Ramsey theorems, in \emph{Infinite and finite sets}, (Colloq., Keszthely, 1973; dedicated to P. Erd\H{o}s on his 60th birthday), Vol. II, North Holland, Amsterdam, Vol. 10 of Colloq Math Soc J\'{a}nos Bolyai, 633--643. 



\bibitem{EG1959} P. Erd\H os and T. Gallai, On maximal paths and circuits of graphs, \emph{Acta Math. Acad. Sci. Hungar.} \textbf{10} (1959), 337--356.

\bibitem{FG2014} Z. F\"uredi and D. S. Gunderson, Extremal numbers for odd cycles, \emph{Combin. Probab. Comput.} \textbf{24} (2015), 641--645.

\bibitem{FHS2017} J. Fox, H. Huang, and B. Sudakov, On graphs decomposable into induced matchings of linear sizes, \emph{Bull. Lond. Math. Soc.} \textbf{49} (2017), no. 1, 45--57.

\bibitem{FSGT1990} R.~J. Faudree, R.~H. Schelp, A. Gy\'{a}rf\'{a}s and Zs. Tuza, The strong chromatic index of graphs, \emph{Ars Combin.} \textbf{29} (1990), 205--211.

\bibitem{LMSS2022} B. Lu\v{z}ar, E. M\'{a}\v{c}ajov\'a, M. \v{S}koviera, and R. Sot\'ak, Strong edge colorings of graphs and the covers of Kneser graphs, \emph{J. Graph Theory} \textbf{100} (2022), no. 4, 686--697.

\bibitem{SS2006} G. N. S\'ark\"ozy, and S. M. Selkow, On an anti-Ramsey problem of Burr, Erd\H os, Graham, and T. S\'os, \emph{J. Graph Theory} \textbf{52} (2006), no. 2, 147--156.


\end{thebibliography}
\end{document}